\begin{document}

\newtheorem{theorem}{Theorem}[section]
\newtheorem{corollary}{Corollary}
\newtheorem*{main}{Main Theorem}
\newtheorem{lemma}[theorem]{Lemma}
\newtheorem{proposition}{Proposition}
\newtheorem{conjecture}{Conjecture}
\newtheorem*{problem}{Problem}
\theoremstyle{definition}
\newtheorem{definition}[theorem]{Definition}
\newtheorem{remark}{Remark}
\newtheorem*{notation}{Notation}
\newcommand{\ep}{\varepsilon}
\newcommand{\eps}[1]{{#1}_{\varepsilon}}

\newcommand{\Real}{\mathbb{R}}
\newcommand{\e}{\varepsilon}
\newcommand{\Om}{\Omega}
\newcommand{\Ome}{\Omega^\e}
\newcommand{\om}{\omega}
\newcommand{\lm}{\lambda}
\newcommand{\lmd}{\lambda^D}
\newcommand{\f}{\varphi}
\newcommand{\n}{\nonumber}

\newcommand{\ue}{u_\e}
\newcommand{\ve}{v_\e}
\newcommand{\we}{w_\e}
\newcommand{\reps}{\rho_\e}
\newcommand{\aeps}{a_\e}
\newcommand{\tQ}{\widetilde{Q}}
\newcommand{\tG}{{\Gamma}}
\newcommand{\tGe}{{\Gamma}^\e}
\newcommand{\Ue}{U_\e}
\newcommand{\Ve}{V_\e}
\newcommand{\We}{W_\e}

\newcommand{\lme}{\lambda^\varepsilon}
\newcommand{\mue}{\mu_\varepsilon}
\newcommand{\ome}{{\om^\varepsilon}}
\renewcommand{\varrho}{\rho}

\newcommand{\intl}{\int}
\newcommand{\D}{\displaystyle}
\newcommand{\eto}{\e\to 0}
\newcommand{\di}{\, \mbox{div}\,}
\newcommand{\dist}{\, \mbox{dist}\,}

\newcommand{\nx}{n_x}
\newcommand{\ny}{n_y}
\newcommand{\yin}{\Big|_{y\in\Gamma}}
\newcommand{\Ah}{A^{\mbox{\scriptsize hom}}}
\newcommand{\p}{\partial}

\newcommand{\cL}{{\mathcal{L}}}
\newcommand{\cH}{{\mathcal{H}}}
\newcommand{\cA}{{\mathcal{A}}}
\newcommand{\cC}{{\mathcal{C}}}
\newcommand{\cD}{{\mathcal{D}}}
\newcommand{\cR}{{\mathcal{R}}}
\newcommand{\cI}{{\mathcal{I}}}
\newcommand{\cQ}{{{Q}}}
\newcommand{\cV}{{\mathcal{V}}}
\newcommand{\cN}{{\mathcal{N}}}
\newcommand{\cM}{{\mathcal{M}}}
\newcommand{\cP}{{\mathcal{P}}}
\newcommand{\cK}{{\mathcal{K}}}
\newcommand{\cB}{{\mathcal{B}}}

\newcommand{\Ae}{{\mathcal{A}^\e}}
\newcommand{\Be}{{\mathcal{B}_\e}}
\newcommand{\He}{{\mathcal{H}^\e}}
\newcommand{\Le}{{\mathcal{L}^\e}}

\newcommand{\be}{\begin{equation}}
\newcommand{\ee}{\end{equation}}
\newcommand{\ben}{\begin{equation*}}
\newcommand{\een}{\end{equation*}}

\newcommand{\bea}{\begin{eqnarray}}
\newcommand{\eea}{\end{eqnarray}}
\newcommand{\bean}{\begin{eqnarray*}}
\newcommand{\eean}{\end{eqnarray*}}

\newcommand{\ba}{\begin{array}}
\newcommand{\ea}{\end{array}}

\newcommand{\bpr}{\begin{proof}}
\newcommand{\epr}{\end{proof}}
\newcommand{\bl}{\begin{lemma}}
\newcommand{\el}{\end{lemma}}

\title[Homogenization with doubly high contrasts]%
{Homogenization of spectral problems\\ in bounded domains with doubly high contrasts}

\author[N.O. Babych, I.V. Kamotski and V.P. Smyshlyaev]%
{\scshape Natalia O. Babych, Ilia V. Kamotski
and Valery P. Smyshlyaev}

\medskip

\begin{abstract}
Homogenization of a spectral problem in a bounded domain with a high contrast in both
stiffness and density is considered. For a special critical scaling, two-scale asymptotic expansions for eigenvalues and
eigenfunctions are constructed. Two-scale limit equations are derived and relate to certain non-standard self-adjoint
operators. In particular they explicitly display the first two terms in the asymptotic
expansion for the eigenvalues, with a surprising bound for the error of order $\e^{5/4}$ proved.
\end{abstract}

\maketitle

{\scriptsize
\emph{2000 Mathematics Subject Classification.}
Primary: 35B27; Secondary: 34E

\emph{Key words.} Homogenization, periodic media, high-contrasts, eigenvalue asymptotics
}


 \section{Introduction}
Homogenization for problems with physical properties which are not only highly oscillatory but also highly
heterogeneous has long been documented to display
unusual effects, for example the memory effects observed by E. Ya. Khruslov \cite{FeKh, Kh, Kh2}.
Of particular interest in this context are the double-porosity models where the parameter of
high-contrast $\delta$ is critically scaled again the periodicity size $\e$, $\delta\sim\e^2$,
e.g. \cite{ADH, BMP}. Those have been treated both by a high-contrast version of the classical method of asymptotic
expansions, e.g. \cite{Panas, Sandr, Cheredn2, KS} and using the techniques of two-scale convergence,
e.g. \cite{Zhikov2000, Zh2, Cherd}.
In particular, for spectral problems in bounded \cite{Zhikov2000}
and unbounded \cite{Zh2} periodic domains
V.V.~Zhikov studied the spectral convergence,
introduced two-scale limit operator,
developed the techniques of two-scale resolvent
convergence and two-scale compactness.
In \cite{KS} the spectral convergence of
eigenvalues in the gaps of Floquet-Bloch spectrum due to defects in double-porosity type
media were studied,
and \cite{Cherd} supplemented this by the analysis of eigenfunction convergence based on an analysis of a
uniform exponential decay.

In this work we study spectral problems
of double-porosity type in a bounded domain $\Omega$ where the
high contrast might occur not only in the ``stiffness'' coefficient but also in the ``density'',
and argue that this leads to some interesting new effects.
Namely, referring to the next section for
precise technical formulations, for the spectral problem
\be
-\di(\aeps\left({x}\right) \nabla\ue)
\,=\,
\lme \reps\left({x}\right) \ue,
\label{intrprob}
\ee
with Dirichlet boundary conditions on the exterior boundary, most generally, both $a_\e$ and $\rho_\e$
are $\e$-periodic, $a_\e=\rho_\e=1$ in the connected matrix and $a_\e\sim\e^\alpha$, $\rho_\e\sim\e^\beta$
in the disconnected inclusions.
(Outside homogenization, the above resembles problems of vibrations
with high contrasts in both density and stiffness, e.g. \cite{NatGol}.)
The double-porosity corresponds to $\alpha=2$ and $\beta=0$. For
$\beta\neq 0$, it is not hard to see that it is $\alpha=\beta+2$ when the spectral problems at the
macro and micro-scales are coupled in a non-trivial way. To explore this, we choose $\beta=-1$ and $\alpha=1$
and show that this leads to some unusually coupled two-scale limit behaviors of the eigenfunctions and the
eigenvalues.

Namely, although the limit behavior of the eigenfunctions is still
somewhat similar to that of double porosity, i.e. the two-scale
limit is a function of only slow variable $x$ in the matrix and a
function of both $x$ and the fast variable $y$ in the inclusions,
the limit equations themselves are quite different. We show that
there exist asymptotic series of eigenvalues
$\lambda^\e\sim\lambda_0+\e\lambda_1$ with $\lambda_0$ being any
eigenvalue of a non-standard self-adjoint ``microscopic'' inclusion
problem, Theorem \ref{limoper1}, whose eigenfunctions are directly
related to the two-scale limit $w_0(x,y)$ in the matrix. In fact,
$\lambda_0$ is either a solution of
$\beta(\lambda_0)=|Q_1|\lambda_0$, where $\beta(\lambda)$ is a
function introduced by Zhikov \cite{Zhikov2000}, or is an eigenvalue
of the Dirichlet Laplacian in the inclusion $Q_0$ with a zero mean
eigenfunction. In the matrix, $u_\e\sim v^0(x)$, where $v^0$ is an
eigenfunction of the homogenized operator in $\Omega$, whose
eigenvalue $\nu$ determines the second term $\lm_1$ in the
asymptotics of $\lambda^\e$, see \eqref{lm1}. This is first derived
via formal asymptotic expansions, but then we prove a non-standard
error bound:
$$  | \lme - \lm_0 - \e \lm_1 | \le C \e^{5/4}, $$

 see Theorems \ref{t1} \& \ref{t2}.
The proof employs a combination of a high contrast boundary layer analysis with maximum principle and estimates in
Hilbert spaces with $\e$-dependent weights.
We finally briefly discuss further refinement of the results via the technique of two-scale convergence. Namely,
some version of the compactness result holds, cf. \cite{Zhikov2000}, indicating at the presence of gaps in the
spectrum for small enough $\e$, see Theorem \ref{2scalecomp}.

The paper is organized as follows.
The next section formulates the problem and introduces necessary notation,
Section 3 executes formal asymptotic expansion and derives associated homogenized equations. Section 4 proves
the error bounds and Section 5 discusses the two-scale convergence approach. Some technical details are assembled in
the appendices.

\section{Problem statement and notations}
We consider a model of eigenvibrations for a body occupying
a bounded domain $\Om$ in $\Real^n$ ($n=2,3,\dots$) containing a periodic array of
small inclusions, see Figure \ref{fig-02}.
The size of inclusions is controlled by a small positive parameter $\e$,
$\e\to 0$.
First we introduce necessary notation.

\begin{figure}[t]
\vspace{0.4cm}
\center
\includegraphics[width=0.7\textwidth]{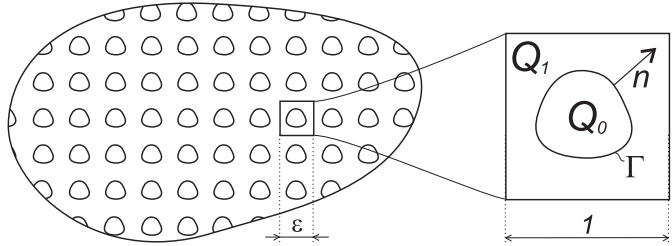}
\caption{The geometry and the periodicity cell}
\label{fig-02}
\end{figure}

Let $Q=[0,1]^n$ be a reference periodicity cell in $\Real^n$. Let
$\widetilde{Q}_0$ be a periodic set of ``inclusions'', i.e.
$\widetilde{Q}_0 + m = \widetilde{Q}_0$, $\forall m \in
\mathbb{Z}^n$, and $Q_0 = \widetilde{Q}_0 \cap Q$ is a reference
inclusion lying inside $Q$ with $C^2$-smooth boundary $\Gamma$, see
Figure \ref{fig-02}. Let $Q_1 = Q \backslash \overline{Q_0}$,
$\widetilde{Q}_1 = \Real^n \backslash \overline{\widetilde{Q}_0}$,
$\widetilde{\Gamma} = \p \widetilde{Q}_0 = \p \widetilde{Q}_1$.
Introducing $y=x / \e$ we refer to $y$ as to a fast variable, as
opposes to the slow variable $x$. In the $x$-variable the
periodicity cell is $\e Q=[0,\e)^n$. If $y\in Q_j$ then $x=\e y \in
\e Q_j$, $j=0,1$. We denote $\Om_0^\e:= \Om \cap \e\tilde{Q}_0$,
$\Om_1^\e:= \Om \cap \e\tilde{Q}_1=\Om\backslash
\overline{\Om_0^\e}$, $\Gamma^\e:=\e\tilde\Gamma\cap\Omega$, see
Figure \ref{fig-02}. The trace on $\tGe$ of function $f: \Om_j^\e
\to \Real^n$ is denoted by $f|_{j}$. Let $n_y$ be the outer unit
normal to $Q_0$ on its boundary $\Gamma$ and let $n_x$ denote the
similar normal on $\tGe$.

Let stiffness $a_\e$ and density $\rho_\e$ be as follows
\ben
a_\e
\left({x}\right)=\left\{\ba{rl}
1, & x\in \Om_1^\e\\
\e,& x\in \Om_0^\e
\ea\right.
\quad\mbox{and}\quad
\rho_\e\left({x}\right)=\left\{\ba{rl}
1, & x\in \Om_1^\e\\
\e^{-1},& x\in \Om_0^\e \ea\right.
\een
with a small positive $\e$.

We study the asymptotic behaviour of
self-adjoint spectral problem
\be
\intl_\Om
a_\e\left({x}\right) \nabla\ue \nabla\phi \, dx -
\lme \intl_\Om
\rho_\e\left({x}\right) \ue \phi \, dx = 0,
\quad \forall \phi\in
H^1_0(\Om) \label{variational}
\ee
as $\eto$. If $\Gamma$ and
$\p\Om$ are smooth enough then variational problem
\eqref{variational} can be equivalently represented in a classical
formulation
\bea
-\di(\aeps\left({x}\right) \nabla\ue)
&=& \lme \reps\left({x}\right) \ue,
\quad x\in\Om,
\label{divergent form}\\
\ue|_{\p\Om} &=& 0,
\label{boundary condition}
\eea
implying that at the interfaces
the transmission conditions are satisfied
\be
\ue\Big|_1 = \ue\Big|_0, \quad
\frac{\p \ue}{\p \nx}\Big|_1 = \e \frac{\p \ue}{\p \nx}\Big|_0.
\label{interfacial conditions}
\ee

\section{Formal asymptotic expansions}\label{fae}
We seek formal asymptotic expansions for the eigenvalues $\lme$ and
eigenfunctions $\ue$ in the form
\bea
\lme &\sim& \lm_0 + \e \lm_1 + \e^2 \lm_2 + \dots,
\label{lme: expansion}\\
\ue(x) &\sim&\left\{
\ba{ll}
\D v_0\left(x,\frac{x}{\e}\right) +
\e v_1\left(x,\frac{x}{\e}\right) +
\e^2v_2\left(x,\frac{x}{\e}\right)+ \dots, & x\in \Om_1^\e,\\[3mm]
\D w_0\left(x,\frac{x}{\e}\right) +
\e w_1\left(x,\frac{x}{\e}\right) +
\e^2w_2\left(x,\frac{x}{\e}\right)+
\dots, & x\in \Om_0^\e.
\ea
\right.
\label{ue: expansion}
\eea
Here all the functions $v_j(x,y)$, $w_j(x,y)$, $j\geq 0$, are required to be periodic in the ``fast'' variable $y$;
$v_0$ and $w_0$ are not simultaneously identically  zero
\be
 v_0^2 + w_0^2 \not \equiv 0.
\label{nonzer}
\ee

In a standard way, the ansatz \eqref{lme: expansion}, \eqref{ue: expansion} is then formally substituted into
\eqref{divergent form}--\eqref{interfacial conditions}. In particular, from
\eqref{divergent form}, for $(x,y)\in \Om\times Q_1$,  we obtain
\bea
-\Delta_y v_0 &=& 0,
\label{eq v0}\\
-\Delta_y v_1 &=& 2 \frac{\p^2 v_0}{\p x_j \p y_j},
\label{eq v1}\\
-\Delta_y v_2 &=& 2 \frac{\p^2 v_1}{\p x_j \p y_j} + \Delta_x v_0 + \lm_0 v_0,
\label{eq v2}
\eea
(with $\Delta_y$ and $\Delta_x$ denoting the Laplace operators in $y$ and $x$, respectively, and summation
henceforth implied with respect to repeated indices),
and for $(x,y)\in \Om\times Q_0$  we have
\bea
-\Delta_y w_0 &=& \lm_0 w_0,
\label{eq w0}\\
-\Delta_y w_1 &=& 2 \frac{\p^2 w_0}{\p x_j \p y_j} + \lm_1 w_0 +
\lm_0 w_1,
\label{eq w1}\\
-\Delta_y w_2 &=& 2 \frac{\p^2 w_1}{\p x_j \p y_j}
+\Delta_x w_0 + \lm_2 w_0 + \lm_1 w_1 + \lm_0 w_2.
\label{eq w2}
 \eea
Further, the first of conditions \eqref{interfacial conditions}
transforms to
\be
v_j(x,y)\yin=
w_j(x,y)\yin, \quad x\in\Om, \quad j=0,1\dots\, .
\label{bc w}
\ee
Similarly, the other transmission condition \eqref{interfacial conditions} yield
\bea
\frac{\p v_0}{\p \ny}\yin &=& 0,
\label{bc v0}\\
\frac{\p v_1}{\p \ny}\yin &=&
- \frac{\p v_0}{\p \nx}\yin + \frac{\p w_0}{\p \ny}\yin,
\label{bc v1}\\
\frac{\p v_2}{\p \ny}\yin &=&
- \frac{\p v_1}{\p \nx}\yin +
\frac{\p w_1}{\p \ny}\yin + \frac{\p w_0}{\p \nx}\yin.
\label{bc v2}
\eea
%
The above has employed the identity
\be
\frac{\p u}{\p \nx} \left(x,\frac{x}{\e}\right)
\,=\,
\e^{-1} \frac{\p u}{\p \ny} (x,y) + \frac{\p u}{\p \nx} (x,y),
\quad y = \frac{x}{\e},
\label{two scale nabla}
\ee
where
$\frac{\p }{\p \ny} := n_y \cdot \nabla_y$,
$\frac{\p }{\p \nx} := n_y \cdot \nabla_x$,
with $\nabla_y$ and $\nabla_x$ standing for gradients in $y$ and $x$, respectively.

Finally, \eqref{boundary condition} suggests
\be
v_0\Big|_{x\in\p\Om}=w_0\Big|_{x\in\p\Om}=0.
\ee
(The boundary layer problem does not generally permit satisfying \eqref{boundary condition}
by $v_j$ and $w_j$ for $j\geq 1$, as also clarified later.)

Combining \eqref{eq v0} and \eqref{bc v0},
together with the periodicity conditions in $y$, implies that $v_0$ is a constant
with respect to $y$, i.e.
$$
v_0(x,y)\equiv v^0(x).
$$
Then, \eqref{eq v1} and \eqref{bc v1} form the following boundary value problem for $v_1$
\bea
\qquad
-\Delta_y v_1(x,y) = 0
\quad\mbox{in}\quad  \Om\times Q_1, \qquad
\frac{\p v_1}{\p \ny}\yin =
- \frac{\p v^0}{\p \nx}\yin + \frac{\p w_0}{\p \ny}\yin.
\label{problem v1}
\eea
The latter is solvable if and only if
\be
\intl_{\tG} \frac{\p w_0}{\p \ny} \, dy =0.
\label{orthogonality w0}
\ee

Considering next \eqref{eq w0} and \eqref{bc w} gives
\be
-\Delta_y w_0 = \lm_0 w_0
\quad\mbox{in}\quad  \Om\times Q_0, \qquad
w_0(x,y)\yin = v^0(x).
\label{problem w0}
\ee
Since
$$
\intl_{\tG} \frac{\p w_0}{\p \ny} \, dy =
\intl_{Q_0} \Delta_y w_0 \, dy = - \lm_0 \intl_{Q_0} w_0 \, dy,
$$
condition \eqref{orthogonality w0} is equivalent to
\be
\lm_0 \langle w_0 \rangle = 0,
\label{orthogonality w0 2}
\ee
where
$$
\langle u \rangle :=  \intl_{Q_0} u(y) \, dy.\qquad
$$

We notice that \eqref{problem w0}--\eqref{orthogonality w0 2}
together with \eqref{nonzer} constitutes restrictions on possible
values of $\lambda_0$.
Those are described by Theorem \ref{limoper1} below.
Before, let us consider an auxiliary Dirichlet problem
\be
-\Delta_y \phi = \lmd \phi
\quad\mbox{in}\quad  Q_0, \qquad
\phi\Big|_{\Gamma} = 0.
\label{auxiliary phi}
\ee
Let $\{\lmd_j\}_{j=1}^\infty$ be eigenvalues for \eqref{auxiliary phi}, labelled in the
ascending order counting for the multiplicities, and let $\{\phi_j\}_{j=1}^\infty$
be the corresponding  eigenfunctions, orthonormal in $L_2(Q_0)$, i.e.
$$
\intl_{Q_0} \phi_j\phi_k \,dy=\delta_{jk},
$$
where $\delta_{jk}$ is Kronecker's delta. Denote by $\sigma_D$ the spectrum of
\eqref{auxiliary phi}: $\sigma_D=\bigcup_{j=1}^\infty \lmd_j$.

We additionally introduce the following auxiliary problem:
\be
-\Delta_y \eta = \lm_0 \eta
\quad\mbox{in}\quad   Q_0, \,\, \quad  \eta(y)\yin = 1.
\label{probleta}
\ee
Notice that \eqref{probleta} is solvable if and only if $\lambda_0\not\in\sigma_D$ or
$\lambda_0=\lambda_j^D$ with all the associated eigenfunctions $\phi_j$ having zero mean,
$\langle\phi_j\rangle=0$\footnote{We remark that the case of eigenvalues with zero mean is known to be
not a ``generic'' case,
i.e. unstable via a small perturbation of the shape of $Q_0$, see e.g. discussion in
\cite{hempel} and
further references therein.}. In the former case $\eta$ is determined uniquely and \eqref{problem w0} implies
$w_0(x,y)=v^0(x)\eta(y)$.
In the latter case $\eta$ is determined up to
an arbitrary eigenfunction $\phi_j$ associated with $\lambda_j^D$, however $\langle\eta\rangle$
is determined uniquely.

By direct inspection, \eqref{problem w0}, \eqref{orthogonality w0 2}
has a non-trivial solution $(v^0, w_0)$,
i.e. with \eqref{nonzer} holding,
if and only if $\lm_0$
is an eigenvalue of following problem:
\be
-\Delta_y \zeta = \lm_0 \zeta
\quad\mbox{in}\quad   Q_0, \qquad
\zeta(y)\yin = \mbox{constant}, \qquad \lambda_0\langle\zeta\rangle=0.
\label{probzeta}
\ee

\begin{theorem}
\label{limoper1}
The problem \eqref{probzeta} is equivalent to an eigenvalue problem for
a self-adjoint operator in $L_2(Q_0)$ with a compact resolvent.
Therefore the spectrum of \eqref{probzeta} is a countable set
of real non-negative eigenvalues (of finite multiplicity)
with the only accumulation point at $+\infty$, with the eigenfunctions complete in
$L_2(Q_0)$ and
those corresponding to different $\lambda_0$ mutually orthogonal.

The spectrum consists of all
the eigenvalues $\lmd$ of problem \eqref{auxiliary phi}
with a zero mean eigenfunction and all the
solutions of the equation
\be
 B(\lambda_0)\,:= \,\lambda_0\langle\eta\rangle=
 \lm_0 \left( |Q_0| + \lm_0
\sum_{j=1}^\infty \frac{ \langle \phi_j \rangle^2}{\lmd_j-\lm_0} \right) = 0
\label{bkslambda}
\ee
(which are hence all real non-negative). In \eqref{bkslambda} the summation is with
respect to only those $\lambda_j^D$ for which there exists an eigenfunction with a non-zero mean.

The associated eigenfunctions $\zeta$ are either proportional to $\eta$ as in \eqref{probleta} or are
eigenfunctions of \eqref{auxiliary phi} with zero mean.
\end{theorem}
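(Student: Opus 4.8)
The plan is to recast \eqref{probzeta} as a quadratic form problem on a suitable subspace of $H^1(Q_0)$. Set $\cV:=\{u\in H^1(Q_0):\ u|_{\Gamma}\ \text{is constant}\}$. Since the trace operator $H^1(Q_0)\to H^{1/2}(\Gamma)$ is bounded and the constant functions form a closed one-dimensional subspace of $H^{1/2}(\Gamma)$, $\cV$ is a closed subspace of $H^1(Q_0)$; and $\cV\supset C_c^\infty(Q_0)$, so $\cV$ is dense in $L_2(Q_0)$. I claim \eqref{probzeta} is equivalent to: find $\lambda_0\in\Real$ and $\zeta\in\cV\setminus\{0\}$ with $\int_{Q_0}\nabla\zeta\cdot\nabla\varphi\,dy=\lambda_0\int_{Q_0}\zeta\varphi\,dy$ for all $\varphi\in\cV$. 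Indeed, if $\zeta$ solves \eqref{probzeta} then multiplying $-\Delta\zeta=\lambda_0\zeta$ by $\varphi\in\cV$ and using Green's formula, together with $\int_{\Gamma}\frac{\p\zeta}{\p \ny}\,dy=\int_{Q_0}\Delta\zeta\,dy=-\lambda_0\langle\zeta\rangle=0$, gives the weak identity (the boundary term $\int_{\Gamma}\frac{\p\zeta}{\p \ny}\varphi\,dy$ vanishes because $\varphi|_{\Gamma}$ is constant). Conversely, testing the weak identity against $\varphi\in C_c^\infty(Q_0)$ yields $-\Delta\zeta=\lambda_0\zeta$ in $Q_0$; since $\Gamma$ is $C^2$, elliptic regularity gives $\zeta\in H^2(Q_0)$, membership in $\cV$ gives $\zeta|_{\Gamma}=\text{const}$, and testing against $\varphi\equiv 1\in\cV$ recovers, via Green's formula again, the constraint $\lambda_0\langle\zeta\rangle=0$.

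Next I would build the self-adjoint operator. The form $a(u,v):=\int_{Q_0}\nabla u\cdot\nabla v\,dy$ on $\cV$ is symmetric and non-negative, and it is closed because its form norm $(a(u,u)+\|u\|_{L_2(Q_0)}^2)^{1/2}$ coincides with the $H^1(Q_0)$ norm, under which $\cV$ is complete. By Kato's first representation theorem, $a$ is generated by a unique non-negative self-adjoint operator $\cA$ in $L_2(Q_0)$; inspecting $D(\cA)$ along the same lines as in the previous paragraph shows $D(\cA)=\{u\in H^2(Q_0):\ u|_{\Gamma}=\text{const},\ \int_{\Gamma}\frac{\p u}{\p \ny}\,dy=0\}$ and $\cA u=-\Delta u$, a non-standard self-adjoint realisation of the Laplacian. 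By construction together with the equivalence just proved, the eigenvalue problem for $\cA$ is precisely \eqref{probzeta}. Moreover $\cV$, being a closed subspace of $H^1(Q_0)$ with $Q_0$ bounded and Lipschitz, embeds compactly into $L_2(Q_0)$ by the Rellich--Kondrachov theorem, so $\cA$ has compact resolvent. The spectral theorem for such operators then yields all the ``soft'' assertions of the theorem: the spectrum is a sequence of real non-negative eigenvalues of finite multiplicity accumulating only at $+\infty$, the eigenfunctions are complete in $L_2(Q_0)$, and eigenfunctions for distinct eigenvalues are orthogonal.

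It then remains to describe the spectrum explicitly. Let $\zeta$ be an eigenfunction; by the regularity above its boundary value $c:=\zeta|_{\Gamma}$ is a well-defined constant. If $c=0$ then $\zeta$ is a Dirichlet eigenfunction of \eqref{auxiliary phi}, so $\lambda_0=\lmd_j$ for some $j$; since $\lmd_j>0$, the constraint $\lambda_0\langle\zeta\rangle=0$ forces $\langle\zeta\rangle=0$, i.e.\ $\zeta$ is a zero-mean Dirichlet eigenfunction; conversely every such $\phi_j$ plainly solves \eqref{probzeta}. If $c\neq0$, rescale to $c=1$, so $\zeta$ solves \eqref{probleta}. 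Writing $\zeta=1+u_0$ with $u_0\in H^1_0(Q_0)$ satisfying $-\Delta u_0-\lambda_0 u_0=\lambda_0$ and expanding $u_0=\sum_j u_0^{(j)}\phi_j$ in $L_2(Q_0)$, one finds $(\lmd_j-\lambda_0)u_0^{(j)}=\lambda_0\langle\phi_j\rangle$ (the solvability of \eqref{probleta} being precisely the condition recorded after it, and all terms with $\langle\phi_j\rangle=0$ dropping out of the sum below, so that effectively only the $\lmd_j$ with a non-zero mean eigenfunction occur); hence $\langle\zeta\rangle=|Q_0|+\lambda_0\sum_j\langle\phi_j\rangle^2/(\lmd_j-\lambda_0)$, and the constraint $\lambda_0\langle\zeta\rangle=0$ becomes exactly $B(\lambda_0)=0$ as in \eqref{bkslambda}. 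Conversely, if $B(\lambda_0)=0$, then $\lambda_0$ is not a pole of the series, which (when $\lambda_0\in\sigma_D$) means that all eigenfunctions at $\lambda_0$ have zero mean, so \eqref{probleta} is solvable, $\langle\eta\rangle$ is well defined, $\lambda_0\langle\eta\rangle=B(\lambda_0)=0$, and $\zeta:=\eta$ solves \eqref{probzeta}. Combining the two cases gives the stated description of the spectrum and of the eigenfunctions; the reality and non-negativity of the roots of $B(\lambda_0)=0$ is not read off from \eqref{bkslambda} directly but follows because those roots are eigenvalues of the non-negative self-adjoint operator $\cA$.

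The ``soft'' part (the first two paragraphs) is routine once the form domain $\cV$ is identified; the conceptual point that makes it go through is that the side condition $\lambda_0\langle\zeta\rangle=0$ in \eqref{probzeta} is not an extra constraint to be imposed by hand but rather the natural boundary condition $\int_{\Gamma}\frac{\p\zeta}{\p \ny}\,dy=0$ produced by testing against the constant function in $\cV$. I expect the main obstacle to be the bookkeeping in the last paragraph: one must treat carefully the borderline case $\lambda_0\in\sigma_D$, in which \eqref{probleta} may or may not be solvable and in which $\lambda_0$ may enter the spectrum through the Dirichlet mechanism, through $B(\lambda_0)=0$, through both, or through neither, and one must check that the description stays exhaustive and consistent with the pole structure of $B$; a minor related point is that in the presence of multiplicities the quantity $\sum_{\lmd_j=\mu}\langle\phi_j\rangle^2$ is the squared $L_2(Q_0)$-norm of the orthogonal projection of the constant $1$ onto the $\mu$-eigenspace and is therefore basis-independent.
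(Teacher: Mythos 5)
Your proposal is correct and follows essentially the same route as the paper: the same Dirichlet form $\int_{Q_0}\nabla\zeta\cdot\nabla h\,dy$ on the domain of $H^1(Q_0)$ functions with constant trace, the same observation that testing against $C_0^\infty(Q_0)$ and against $h\equiv 1$ recovers the PDE and the constraint $\lambda_0\langle\zeta\rangle=0$, and the same spectral decomposition of $\eta$ to arrive at \eqref{bkslambda}. You merely supply more detail (closedness of the form, Kato's representation theorem, Rellich compactness, and the bookkeeping in the degenerate case $\lambda_0\in\sigma_D$) than the paper's terser argument.
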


\bpr
 We claim that
\eqref{probzeta} corresponds to a self-adjoint operator associated with the
(symmetric, closed, densely defined, bounded from below) Dirichlet form
\be
\alpha (\zeta, h)\,:=\,\int_{Q_0}\nabla\zeta\cdot\nabla h \,\, dy
\label{dirichl}
\ee
with domain
\be
D(\alpha):=\{h\in H^1(Q_0):\quad h\yin=\mbox{ constant}\}.
\label{dom}
\ee
To see this, in the weak formulation of the eigenvalue problem associated with
\eqref{dirichl}--\eqref{dom}
\be
\int_{Q_0}\nabla\zeta\cdot\nabla h\,\, dy \,
= \,\lambda_0 \int_{Q_0} \zeta\, h \,\, dy,
\qquad \forall h\in D(\alpha),
\label{wf}
\ee
we first set $h$ to be an arbitrary function from $C^\infty_0(Q_0)$ which implies
$-\Delta_y\zeta=\lambda_0\zeta$ in $Q_0$, and then set $h\equiv 1$ yielding $\lambda_0\langle\zeta\rangle=0$.
Further, since the resolvent is obviously compact,
each eigenvalue has a finite multiplicity,
the set of all eigenfunctions $\zeta$ is complete in $L_2(Q_0)$ and
those corresponding to different $\lambda_0$ are mutually orthogonal.

Obviously, the spectrum of \eqref{probzeta} includes
those and only those eigenvalues of \eqref{auxiliary phi} which have an
eigenfunction $\phi_j$ with zero mean.
In this case corresponding eigenfunctions of  \eqref{probzeta}
are given by
$\zeta_j = C \phi_j$, $C \not= 0$. If $\lambda_j^D$ does not have a
zero-mean eigenfunction, then the solvability of \eqref{probzeta}
requires $\zeta\yin = 0$ implying $\zeta\equiv 0$.
Considering other possibilities, fix $\lambda_0$
outside $\sigma_D$ and
let $\eta$ be the unique solution of \eqref{probleta}.
Then $\lambda_0$ is an eigenvalue of \eqref{probzeta}
if and only if
\be
\lambda_0\langle\eta\rangle \,=\, 0,
\label{probleta2}
\ee
with corresponding eigenfunction given by
$\zeta(y) = C \eta(y)$, $C \not= 0$.

Via the spectral decomposition, the solution to
\eqref{probleta} is found to be, cf. \cite{Zhikov2000}:
\be
\eta(y)=1+
\lm_0 \sum_{j=1}^\infty \frac{ \langle \phi_j \rangle}{\lmd_j-\lm_0} \phi_j(y).
\label{spectral decomposition w0}
\ee
Substituting \eqref{spectral decomposition w0} further into \eqref{probleta2}
yields \eqref{bkslambda}.
\epr

The formula \eqref{bkslambda} can be transformed to read
\be
 B(\lm_0) = \beta(\lm_0) - |Q_1| \lm_0\, = 0,
\label{equation for lm0}
\ee
where function $\beta(\lm)$ has been introduced by
Zhikov \cite{Zhikov2000}:
\be
\beta(\lm) =
\lm + \lm^2 \sum_{j=1}^\infty \frac{ \langle \phi_j \rangle^2}{\lmd_j-\lm},
\label{betazhik}
\ee
see Figure \ref{fig-1}.
This implies that $\lm_0$ is either a solution to the nonlinear equation
\be
\beta(\lm) = |Q_1| \lm,
\label{equation for lm0 2}
\ee
as visualized on Figure \ref{fig-1}, or is an eigenvalue of \eqref{auxiliary phi} with a zero
mean eigenfunction.

\begin{figure}[t]
\vspace{0.4cm}
\center\includegraphics[width=0.7\textwidth]{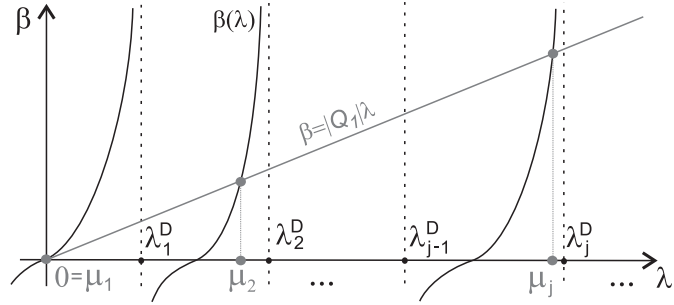}
\caption{The limit eigenvalues $\lm_0=\mu_j$}
\label{fig-1}
\end{figure}

\begin{remark}
\emph{ If $Q_0$ is a ball of radius $0<a<1/2$, i.e   $Q_0=B_a=\{y:|y|<a\}+y_0$, then we have an explicit
representation for $\beta(\lm)$. Indeed, for $\lambda_0\not\in\sigma_D$ the solution of
\eqref{probleta} is radially symmetric and (placing the origin in the ball's centre) reads
\ben
        \eta(y)=|y|^{\frac{2-n}{2}}J_{\frac{n-2}{2}}(\lm_0^{1/2}|y|)
        \left(|a|^{\frac{2-n}{2}}J_{\frac{n-2}{2}}(\lm_0^{1/2}a)\right)^{-1},
\een
where  $J_{\frac{n-2}{2}}(|y|)$  is  Bessel function.  Further, we
have
\ben
     B(\lambda_0) =\lambda_0  \langle\eta\rangle
     =
-\int_{\p B_a}{\frac{\p \eta}{\p n_y}} dy=
\een
\ben
        =-\frac{1}{a }|\Gamma|
        \left(1-n/2+
        a\lm_0^{1/2}J_{\frac{n-2}{2}}'(\lm_0^{1/2}a)/J_{\frac{n-2}{2}}(\lm_0^{1/2}a)\right)
        .
\een
Using \eqref{betazhik}, \eqref{spectral decomposition w0} we obtain
\ben
        \beta(\lambda)=\lm(1-|B_a|)-\frac{1}{a }|\Gamma|
        \left(1-n/2+
        a\lm^{1/2}J_{\frac{n-2}{2}}'(\lm^{1/2}a)/J_{\frac{n-2}{2}}(\lm^{1/2}a)\right)
        .
\een
In particular, for $n=3$ we have,
\ben
  B(\lambda_0) = \lambda_0      \langle\eta\rangle=
        {4\pi a}
        \left(1-
        a\lm_0^{1/2} \, \emph{cotan} \,(\lm_0^{1/2}a)\right)
        ,
\een
\ben
    \beta(\lambda)=\lm(1-4\pi a^3/3)+4\pi a
        \left(1-
        a\lm^{1/2} \, \emph{cotan} \,(\lm^{1/2}a)\right)
        .
\een
}\end{remark}

\medskip

We next explore in detail the further steps in the method of asymptotic expansions, to determine
$v^0$, etc.
Let us consider a $K$-dimensional eigenspace ($K \ge 1$)
for a given eigenvalue $\lm_0$
of \eqref{probzeta}, and
let  $\zeta_1, \dots, \zeta_K$ be associated linearly
independent eigenfunctions.
Then, \eqref{problem w0} and \eqref{orthogonality w0 2}
imply
\be
w_0(x,y) = \sum_{k=1}^K c_k(x) \zeta_k(y).
\label{w0 representation}
\ee
Following Theorem \ref{limoper1} we distinguish  two cases:
\begin{itemize}

\item[(a)]
$\lambda_0 \not\in \sigma_D$. In this case \eqref{probleta}
and \eqref{problem w0} suggest
\be
w_0(x,y)=v^0(x)\eta(y),
\label{case1w0}
\ee
and \eqref{nonzer} implies $v^0 \not\equiv 0$.

\item[(b)]
$\lambda_0 \in \sigma_D$.
The latter means
$\lambda_0=\lmd_j$ for some $j$.
This includes two further possibilities:
\begin{itemize}
\item[(i)] The eigenspace of \eqref{auxiliary phi}
has an eigenfunction $\phi_j^*$ with a non-zero mean.
Since the solvability conditions for \eqref{problem w0} include
\be
v^0(x)\langle \phi_j^* \rangle\,=\,0,
\label{case2solv}
\ee
necessarily $v^0 \equiv 0$.
Moreover, with $K_D$ denoting the multiplicity of $\lambda_j^D$ as of the eigenvalue of the Dirichlet
problem \eqref{auxiliary phi}, necessarily $K_D \ge 2$: if $K_D=1$ then
$w_0 = C(x) \phi_j^*$ and thus \eqref{orthogonality w0 2}
implies $C(x) \equiv  0$ and $w_0 \equiv 0$ contradicting to \eqref{nonzer}.
Hence $w_0$ is given by  \eqref{w0 representation} with $K=K_D-1$, with $\zeta_k$, $k=1,...,K$ being
linearly independent eigenfunctions of \eqref{auxiliary phi} with zero mean (such $K$ eigenfunctions
exist).
\item[(ii)]
All of the eigenfunctions corresponding $\lmd_j$ have a zero mean.
In this case $w_0$ is again given by \eqref{w0 representation}, with $K=K_D$ if $\langle\eta\rangle\neq0$ i.e. $B(\lambda_0)\neq 0$ and
$K=K_D+1$ if $B(\lambda_0)=0$ with $\zeta_{K_D+1}=\eta$ where $\eta(y)$ is any solution of \eqref{probleta}.
\end{itemize}
\end{itemize}

\subsection[Case (a)]{Case (a): \protect{$\lambda_0 \not\in \sigma_{D}$}}
\label{formal case a}
In this case $\lambda_0$ are solutions of \eqref{equation for lm0 2}.
There is a countable set of $\lm_0=\mu_j$, $j=1,2,\dots$ as Figure \ref{fig-1} illustrates.
Note that this includes $\lm_0=0$.
Function $\beta$ blows up at the points $\lmd_j$,
which are eigenvalues of \eqref{auxiliary phi} having an eigenfunction with a non-zero mean,
monotonically increasing between such points.
It also directly follows from \eqref{betazhik} that $\beta(\lambda) > |Q_1|\lambda$ for $\lambda\in (0, \lmd_1)$,
implying  $\lmd_1 < \mu_2 < \lmd_2$.
Let  $\lm_0$ satisfying \eqref{equation for lm0 2} be fixed.

We consider problem \eqref{problem v1} taking into account \eqref{case1w0}, i.e.
\be
-\Delta_y v_1(x,y) = 0
\quad\mbox{in}\quad  \Om\times\cQ_1, \qquad
\frac{\p v_1}{\p \ny}\yin =
- \frac{\p v^0}{\p \nx}\yin + v^0(x)\frac{\p \eta}{\p \ny}\yin,
\label{probl v1}
\ee
where $\eta(y)$ solves \eqref{probleta} and is given by \eqref{spectral decomposition w0}.
Hence $v_1$ is a solution to a problem
depending linearly on $v^0$ and $\nabla_x v^0$,
implying
\be
v_1(x,y) = v^0(x) \cN(y) + \frac{\p v^0}{\p x_j} N_j(y)
+ v_1^*(x),
\label{representation v1}
\ee
with an arbitrary function $ v_1^*(x)$. The choice of $v_1^*$ does not affect the subsequent
constructions, so we set for simplicity $v_1^* \equiv 0$.
In \eqref{representation v1}
functions $N_j$ and $\cN$ are solutions to the problems
\be
\Delta_y N_j(y)= 0 \quad\mbox{in}\quad Q_1,\qquad
\frac{\p N_j}{\p n_y}\Big|_{y\in\Gamma} = - n_j(y),
\label{problem Nj}
\ee
and
\be
\Delta_y \cN(y) = 0 \quad\mbox{in}\quad Q_1,\qquad
\frac{\p \cN}{\p n_y}\Big|_{y\in\Gamma} = \frac{\p \eta}{\p n_y}\Big|_{y\in\Gamma}.
\label{problem cN}
\ee
Solvability of \eqref{problem cN} requires
$$
\intl_{\Gamma} \frac{\p \eta}{\p n_y} \, dy = 0,
$$
which is equivalent to \eqref{probleta2} and is hence already assured.
Since the solutions of \eqref{problem Nj} and \eqref{problem cN} are unique up to an
arbitrary constant, we fix those by choosing
$$
\intl_{Q_1} N_j(y) \, dy\, =\,
\intl_{Q_1} \cN(y) \, dy = 0.
$$

We next consider
the problem for $w_1$, which from \eqref{eq w1} and \eqref{bc w} combined with
\eqref{case1w0} reads
\begin{align}
-\Delta_y w_1 - \lm_0 w_1
&=
\lm_1 v^0 \eta + 2 \D\frac{\p v^0}{\p x_j} \frac{\p \eta}{\p y_j}
\quad\mbox{in}\quad \Om\times\cQ_0,
\label{problem w1 eq}
\\
w_1 \yin
&=
\D v^0 \cN(y)\yin + \frac{\p v^0}{\p x_j} N_j(y)\yin.
\label{problem w1 bc}
\end{align}
Since the problem depends linearly on $v^0$, $\lm_1 v^0$ and $\frac{\p v^0}{\p x_j}$,
the solution admits representation
\be
w_1(x,y) = \frac{\p v^0}{\p x_j}(x) \cM_j(y) + v^0(x) \cP(y) + \lm_1 v^0(x) \cR(y),
\label{representation w1}
\ee
where functions $\cM_j$, $\cP$ and $\cR$ are solutions to the problems
\be
-\Delta_y \cM_j - \lm_0 \cM_j = 2 \frac{\p \eta}{\p y_j}(y)
\quad\mbox{in}\quad Q_0,
\qquad
\cM_j\Big|_{\Gamma} = N_j\Big|_{\Gamma},
\label{problem cM}
\ee
\be
-\Delta_y \cP - \lm_0 \cP = 0
\quad\mbox{in}\quad Q_0,
\qquad
\cP\Big|_{\Gamma} = \cN\Big|_{\Gamma},
\label{problem cP}
\ee
and
\be
-\Delta_y \cR - \lm_0 \cR = \eta(y)
\quad\mbox{in}\quad Q_0,
\qquad
\cR\Big|_{\Gamma} = 0.
\label{problem cR}
\ee
Since by the assumption $\lambda_0\not\in\sigma_D$,
all the problems \eqref{problem cM} -- \eqref{problem cR} are uniquely solvable.

The problem for $v_2$ is in turn given by
\eqref{eq v2} and \eqref{bc v2},
whose solvability condition hence reads
\be
\intl_{Q_1} \left( \Delta_x v_0 + \lm_0 v_0 + 2 \frac{\p^2 v_1}{\p x_j \p y_j} \right)\, dy =
\intl_{\Gamma} \left( - \frac{\p v_1}{\p \nx} +
\frac{\p w_1}{\p \ny} + \frac{\p w_0}{\p \nx} \right)\, dy,
\label{solvability v2}
\ee
with functions $v_1$, $w_1$ and $w_0$ given by \eqref{representation v1},
\eqref{representation w1} and \eqref{case1w0} respectively.

Appendix A provides a detailed calculation showing that the above yields the following equations
for $v^0$:
\bea
-\di \Ah \nabla_x v^0 &=& \nu(\lambda_1) v^0 \quad\mbox{in}\quad \Om,
\label{homogenized problem eq}\\
v^0\Big|_{\p\Om} &=& 0.
\label{homogenized problem bc}
\eea
Here $\Ah=\left( \Ah_{jk}\right)_{j,k=1}^n$ is the classical homogenized matrix for
periodic perforated domains, see e.g. \cite{JKO}
\be
\Ah_{jk} =|Q_1|\delta_{jk} + \intl_{Q_1} \frac{\p N_k}{\p y_j} \, dy;
\label{A hom}
\ee
\be
\nu(\lm_1) = \cC \lm_1+ \lm_0 \Big( |Q_1| + \intl_{Q_0} \cP \, dy \Big),
\label{nulam1}
\ee
where
\be
\cC:=\int_{Q_0}\eta^2dy > 0.
\label{cgarnaja}
\ee
Note that the problem \eqref{homogenized problem eq}--\eqref{homogenized problem bc}
involves $\nu = \nu(\lm_1)$ as a spectral parameter.

The spectrum of \eqref{homogenized problem eq}--\eqref{homogenized problem bc} consists of a
countable set of eigenvalues
\be
0 < \nu_1 < \nu_2 \le \dots \le \nu_n \le \dots \to +\infty.
\label{nun}
\ee
Corresponding eigenfunctions $v_n$ form an orthonormal basis
in $L_2(\Omega)$,
$$
\intl_{\Om} v^0_n v^0_m \, dx = \delta_{nm}.
$$

Fixing an eigenvalue $\nu$ of \eqref{homogenized problem eq},
\eqref{homogenized problem bc} with corresponding
eigenfunction $v^0$ of unit norm in $L_2(\Om)$,
according to \eqref{nulam1} we find
\be
\lm_1 = \cC^{-1} \left(
\nu - \lm_0 \Big( |Q_1| + \intl_{Q_0} \cP \, dy \Big)
\right).
\label{lm1}
\ee

The following diagram summarizes the algorithm for
constructing the first terms of the asymptotic expansions
(for the case $\lambda_0\not\in\sigma_D$)
$$
\left.
\ba{lcrcl}
N_j &\stackrel{\eqref{A hom}}{\rightarrow}& \Ah
&\stackrel{\eqref{homogenized problem eq}}{\rightarrow}& \nu, v^0\\
\lm_0 &\stackrel{\eqref{probleta}}{\rightarrow}& \eta &\stackrel{\eqref{problem cN}}{\rightarrow}& \cN
\ea
\right\}
\left.
\ba{ll}
\stackrel{\eqref{lm1}}{\rightarrow}& \lm_1\\
\stackrel{\eqref{representation v1}}{\rightarrow}& v_1\\
\stackrel{\eqref{problem cM}-\eqref{problem cR}}{\rightarrow}& \cM, \cP, \cR\\
\stackrel{\eqref{case1w0}}{\rightarrow}& w_0
\ea
\right\}
\stackrel{\eqref{representation w1}}{\rightarrow} w_1
\stackrel{\eqref{eq v2},\eqref{bc v2}}{\rightarrow} v_2.
$$

We can additionally construct $w_2$ from \eqref{eq w2} and \eqref{bc w},
whose unique solution exists for any choice of $\lambda_2$.
For purposes of  the justification of the first two terms in the asymptotics (the next section)
it is sufficient to set $\lm_2 = 0$ and fix the corresponding solution $w_2$.

This completes constructing a formal asymptotic approximation, which we now summarize.
We introduce an approximate eigenvalue
\be
\Lambda_\e = \lm_0 + \e \lm_1,
\label{Lme}
\ee
and corresponding approximate eigenfunction
\be
\We(x)=
\left\{
\ba{ll}
\D v^0(x) + \e v_1\left(x,\frac{x}{\e}\right)
+ \e^2 v_2\left(x,\frac{x}{\e}\right)
,
& x \in \Om_1^\e,
\\
\D w_0\left(x,\frac{x}{\e}\right) + \e
w_1\left(x,\frac{x}{\e}\right) + \e^2 w_2\left(x,\frac{x}{\e}\right)
, & x \in \Om_0^\e. \ea \right. \label{Ue} \ee

The essence of the above formal asymptotic construction is that the
action of differential operator ${\Ae}$ on $\We$ defined by
\be
{\Ae}\We\,:=\,   \di(
a_\e \nabla \We )
+\Lambda_\e
\rho_\e \We
\label{leps}
\ee
produces a small right-hand side in both $\Omega_1^\e$ and $\Omega_0^\e$, and
on the interface $\Gamma^\e$ in the following sense.
\bl
$
(i) \,\,\,
\D \max_{\bar{\Om}^\e_1} \left|
\emph{\di} (
a_\e \nabla \We
)
+
\Lambda_\e
\rho_\e \We
\right|
\le
C \e.
$

$
(ii) \,\,\,
\D \max_{\bar{\Om}^\e_0} \left|
\emph{\di} (
a_\e \nabla \We
)
+
\Lambda_\e
\rho_\e \We
\right|
\le
C \e^2.
$

$
(iii) \,\,\,
\D \max_{\tGe} \left|
\left. a_\e \frac{\p \We}{\p n} \right|_0 -
\left. a_\e \frac{\p \We}{\p n} \right|_1
\right|
\le C \e^2.$
\label{estimate in Ome1}
\el
\bpr
$(i)\,$
Since the function $\We$ is two-scale by the construction, in $\Ome_1$
{\small
\begin{align}
\di(&
a_\e \nabla \We )
+
\Lambda_\e
\rho_\e \We
=
\n \\
& =
\left(
\e^{-2} \Delta_y +
\e^{-1} 2 \frac{\p^2}{\p x_j \p y_j} +
\Delta_x
+ \lm_0 + \e \lm_1
\right)
(v^0(x) + \e v_1(x,y) + \e^2 v_2(x,y))|_{y=\frac{x}{\e}}
=
\n \\
& =
\left\{
\e^{-1} \Delta_y v_1(x,y) +
\e^{0} \left(
\Delta_y v_2 +
2 \frac{\p^2 v_1}{\p x_j \p y_j} +
\Delta_x v^0 + \lm_0 v^0
\right)
+
\right.
\n \\
& +
\left.\left.
\e^1 \left(
2 \frac{\p^2 v_2}{\p x_j \p y_j} +
\Delta_x v_1 + \lm_1 v^0 + \lm_0 v_1
\right)
+
\e^2 (\Delta_x v_2 + \lm_1 v_1 +\lm_0 v_2)
+
\e^3 \lm_1 v_2
\right\}\right|_{y=\frac{x}{\e}}.
\label{coef estimate 1}
\end{align}
}
Since $v_1$ is a solution to \eqref{probl v1},
the coefficient of $\e^{-1}$ vanishes.
The same is with the coefficient of $\e^0$
since $v_2$ satisfies \eqref{eq v2}.
Functions $v^0$, $v_1$ and $v_2$
are solutions of elliptic problems with smooth enough
coefficients to guarantee belonging
solutions to $C^2$. Thus,  maxima for coefficients
of $\e^1$, $\e^2$ and $\e^3$ in \eqref{coef estimate 1} exist.

$(ii)\,$
Similarly, in $\Ome_0$
{\small
\begin{align}
&\di(
a_\e \nabla \We )
+
\Lambda_\e
\rho_\e \We
=
\n \\
& =
\left(
\e^{-1} \Delta_y + 2 \frac{\p^2}{\p x_j \p y_j} + \e \Delta_x
+ \e^{-1} \lm_0 + \lm_1
\right)
(w_0(x,y) + \e w_1(x,y) + \e^2 w_2(x,y))|_{y=\frac{x}{\e}}
=
\n \\
& =
\left\{
\e^{-1} ( \Delta_y w_0 + \lm_0 w_0 ) +
\e^{0} \left(
 \Delta_y w_1 + 2 \frac{\p^2 w_0}{\p x_j \p y_j} +
\lm_0 w_1 + \lm_1 w_0
\right)
+
\right.
\n \\
& +
\e^1 \left(
\Delta_y w_2 +
2 \frac{\p^2 w_1}{\p x_j \p y_j} +
\Delta_x w_0 + \lm_0 w_2 + \lm_1 w_1
\right)
+
\n \\
& +
\left.\left.
\e^2 \left(
2 \frac{\p^2 w_2}{\p x_j \p y_j} +
\Delta_x w_1 + \lm_1 w_2
\right)
+
\e^3
\Delta_x w_2
\right\}\right|_{y=\frac{x}{\e}}.
\n
\end{align}
}
Since $w_0(x,y) = v^0(x) \eta(y)$ is chosen according to
\eqref{probleta}, the coefficient of $\e^{-1}$ vanishes.
The coefficient of $\e^0$  vanishes due to
\eqref{problem w1 eq}.
Further,
$w_2$ satisfies \eqref{eq w2} with $\lm_2 = 0$
and thus the coefficient of $\e^1$ is zero as well.
Since $w_1$ and $w_2$
are solutions of elliptic problems with smooth enough
coefficients,
the maxima of the coefficients of  $\e^2$ and $\e^3$ exist.

$(iii)\,$
Using \eqref{two scale nabla}, we obtain
{\small
\begin{align}
\left. a_\e \frac{\p \We}{\p n} \right|_0
& -
\left. a_\e \frac{\p \We}{\p n} \right|_1
=
\left(
\frac{\p}{\p \ny} + \e \frac{\p}{\p \nx}
\right)
(w_0(x,y) + \e w_1(x,y) +\e^2 w_2(x,y) )
\Big|_{\substack{x\in \tGe \\
y \in \tG \hspace{1.5mm}}}
-
\n \\
& -
\left(
\e^{-1}\frac{\p}{\p \ny} + \frac{\p}{\p \nx}
\right)
(v^0(x) + \e v_1(x,y) +\e^2 v_2(x,y) )
\Big|_{\substack{x\in \tGe \\
y \in \tG \hspace{1.5mm}}}
=
\n \\
& = \e^0 \left( \frac{\p w_0}{\p \ny} - \frac{\p v_1}{\p \ny} -
\frac{\p v^0}{\p \nx} \right)
\Big|_{\substack{x\in \tGe \\
y \in \tG \hspace{1.5mm}}}
+
\e^1 \left(
\frac{\p w_1}{\p \ny}
+
\frac{\p w_0}{\p \nx}
-
\frac{\p v_2}{\p \ny}
-
\frac{\p v_1}{\p \nx}
\right)
\Big|_{\substack{x\in \tGe \\
y \in \tG \hspace{1.5mm}}}
+
\n \\
& +
\e^2 \left(
\frac{\p w_2}{\p \ny}
+
\frac{\p w_1}{\p \nx}
-
\frac{\p v_2}{\p \nx}
\right)
\Big|_{\substack{x\in \tGe \\
y \in \tG \hspace{1.5mm}}}
+
\e^3
\frac{\p w_2}{\p \nx}
\Big|_{\substack{x\in \tGe \\
y \in \tG \hspace{1.5mm}}}.
\end{align}
}
The coefficients of $\e^{0}$ and $\e^1$ vanish because of
\eqref{problem v1} and \eqref{bc v2} respectively. The
rest of the coefficients are smooth enough to guarantee that their
maxima for $x\in \tGe$ and $y \in \Gamma$ exist. \epr

\subsection[Case (b)]{Case (b): $\lambda_0=\lambda_j^D$}
\label{section: formal case b}
For simplicity, we consider here only the case of eigenvalues of multiplicity
$K=1$ with zero mean eigenfunction ($\phi=\phi_j$), assuming additionally $\lambda_0$ is
{\it not} a solution of \eqref{equation for lm0}. All other degenerate cases,
see page \pageref{case2solv}, could be considered similarly.

In this case we can introduce a refined approximation for the eigenfunction
    \be
    \We^*(x)= \left\{ \ba{ll} \D \e v_1\left(x,\frac{x}{\e}\right) +\e^2
    v_2\left(x,\frac{x}{\e}\right), & x \in \Om_1^\e,
    \\
    \D w_0\left(x,\frac{x}{\e}\right) + \e
    w_1\left(x,\frac{x}{\e}\right) +\e^2 w_2\left(x,\frac{x}{\e}\right),
    & x \in \Om_0^\e. \ea \right.
     \label{w'}
     \ee
where
    \be
        w_0(x,y)=c(x)\phi(y).
    \ee

\bl
Let $c\in C^3(\Omega)$, then
there exist smooth functions
$v_1,w_1,v_2,w_2$ and a constant $\lambda_1$ such that
$\Lambda_\e = \lambda_j^D + \e \lm_1$
and $\We^*$ defined by \eqref{w'} satisfy
\begin{itemize}

\item[(i)]
\quad
$ \We^*(x)\in C(\Omega),$

\item[(ii)]
\quad
$  \D \max_{\bar{\Om}^\e_1} \left| \emph{\di} ( a_\e
\nabla \We^* ) + \Lambda_\e \rho_\e \We^* \right| \le C \e, $

\item[(iii)]
\quad
$  \D \max_{\bar{\Om}^\e_0} \left| \emph{\di} ( a_\e
\nabla \We^* ) + \Lambda_\e \rho_\e \We^* \right| \le C \e^2, $

\item[(iv)]
\quad
$ \D \max_{\tGe} \left| \left. a_\e \frac{\p \We^*}{\p n}
\right|_0 - \left. a_\e \frac{\p \We^*}{\p n} \right|_1 \right| \le C
\e^2.$
\end{itemize}
\label{est p}
\el

\bpr
See Appendix B.
\epr

\section{Justification of asymptotics}

\subsection{Operator formulation}
We use a standard notation for Lebesgue and Sobolev spaces:
$L^2_p(\Om)$ is a $p$-weighted $L^2$-space of square-integrable
functions in $\Om$.
Notation $(\cdot,\cdot)_H$ is used
for a scalar product in a Hilbert space $H$.

Let $\Le = L^2_{\rho_\e}(\Om)$ and
$\He$ be $H^1_0(\Om)$ Sobolev space with a scalar product
$$
(u,v)_\He = \intl_\Om a_\e(x) \nabla u \cdot \nabla v \, dx +
\intl_\Om \rho_\e(x) u v \, dx.
$$
Following a standard procedure, see e.g. \cite{JKO}, we introduce a
bounded operator $\Be : \Le \to \Le$ such that
    \be
        (\Be f, v)_\He =
        (f,v)_\Le, \quad \forall v \in \He.
    \label{operator definition}
    \ee
In other words $\Be f=\ue$, where $\ue$ is the solution of the
problem
    \bea
        -\di(\aeps \nabla\ue) +\reps \ue&=&
        \reps f, \quad x\in\Om,
        \label{divergent form4}\\
        \ue|_{\p\Om} &=& 0,
    \label{boundary condition4}
    \eea
 \be \ue\Big|_1 = \ue\Big|_0, \quad \frac{\p \ue}{\p
\nx}\Big|_1 = \e \frac{\p \ue}{\p \nx}\Big|_0. \label{interfacial
conditions4} \ee
 Note that operator
$\Be$ is positive, self-adjoint and compact for any fixed $\e > 0$
(since its image is in $\He$).
Eigenvalue problem \eqref{variational}
is equivalent to \be \Be \ue = (\lme + 1)^{-1} \ue \quad \text{in}
\quad \Le. \label{operator formulation} \ee Hence the spectrum of the
problem consists of a countable set of eigenvalues
$$
0 < \lme_1 < \lme_2 \le \dots \le \lme_k \le \dots \to +\infty,
$$
with the only accumulation point at $+\infty$. Moreover, the set of
corresponding eigenfunctions is complete in $\Le$.

\subsection{Case (a)}
In this Section we justify the leading terms of asymptotic expansions
constructed above in case $\lm_0 \not\in \sigma_D$
and thus $v^0 \not\equiv 0$, see Section \ref{formal case a}.
Let $\lm_0$ be a solution to equation \eqref{equation for lm0 2}.
All the functions
($\eta$, $N_j$, $\cN$, $\cM$, $\cP$, $\cR$, $w_0$, $w_1$, $v_1$
and $w_2$, $v_2$)
are as defined in Section \ref{formal case a}.
We also fix $\lm_1$ according to \eqref{lm1}.
The approximate eigenvalue $\Lambda_\e$ and eigenfunction $W_\e$ are given
by \eqref{Lme} and \eqref{Ue} respectively.

Notice that although $\We\in H^1(\Om)$ since $ \We\Big|_1=\We\Big|_0$,
it   does not satisfy the zero Dirichlet
boundary conditions on $\p \Om$. To fix this we introduce the
following boundary-layer corrector to our approximation.

\bl
There exists a corrector $\Ve$ solving the problem
\begin{align}
- \emph{div}(a_\e \nabla \Ve )
&
+ \rho_\e \Ve = 0
\quad \text{in} \quad \Om,
\label{corrector equation}\\
\Ve|_{\p \Om} = -\We|_{\p \Om},
\quad
\Ve\Big|_1
&
= \Ve\Big|_0,
\quad
\frac{\p \Ve}{\p n}\Big|_1 = \e \frac{\p \Ve}{\p n}\Big|_0,
\label{corrector bc}
\end{align}
such that
$\Ue = \We + \Ve \in H^1_0(\Om)$ 
\quad and \quad
$
\D \max_{\bar \Om}| \Ve | \le C \e.
$
\label{corrector estimate}
\el

\bpr
Clearly such solution of
\eqref{corrector equation}, \eqref{corrector bc}
does exist. On each of the  subsets $\Ome_1$
and $\Ome_0$ the coefficients of \eqref{corrector equation}
are smooth. Then the function $\Ve$ can reach its positive maximum or
negative  minimum
only at the boundaries $\tGe$ or $\p \Om$. Let us prove that this
cannot be $\tGe$. Suppose to the contrary the existence of $x_* \in
\tGe$ such that $\D \max_{\bar \Om}  |\Ve|  = |\Ve(x_*)|$. The
strong maximum principle yields that there is no more point inside
$\Ome_1$ or $\Ome_0$ where the maximum is reached. Without  loss
of generality we assume $\Ve(x_*) > \Ve(x)$ for any $x \in \Om
\backslash \tGe$ and $\Ve(x_*) \ge 0$ (otherwise the point would be
a positive maximum for $-\Ve$ and we would then consider $-\Ve$). Then by
the virtue of Hopf's Lemma \cite[p.330]{Evans} applied in the
relevant component of  $\Ome_0$ we have
$$
\left.\frac{\p \Ve}{\p n}\right|_0(x_*) > 0.
$$
From transmission conditions \eqref{corrector bc}
we have that the normal derivative on the $\Ome_1$ side of domain
is also positive. Therefore the value of $\Ve$ increases
from the point $x_*$ inside  $\Ome_1$ in the $n$-direction and hence
$x_*$ is not a point of maximum of $\Ve$ in $\Ome_1$.
The contradiction proves that $| \Ve |$ reaches it's maximum at $\p \Om$.
Then, from boundary conditions \eqref{corrector bc},
$$
\D \max_{\bar \Om}|\Ve| = \max_{\p \Om}|\Ve| =
\max_{\p \Om}|\We| \leq \\
\e \max_{\p \Om}\left| v_1\left(x,\frac{x}{\e}\right)
+ \e v_2\left(x,\frac{x}{\e}\right)
\right|
+
$$
$$
\e \max_{\p \Om}\left| w_1\left(x,\frac{x}{\e}\right)
+ \e w_2\left(x,\frac{x}{\e}\right)
\right|
 \le
C \e.
$$

Obviously $\Ue = \We + \Ve$ satisfies zero boundary condition
on $\p \Om$ and thus belongs to $H^1_0(\Om)$.
\epr

\bl
The constructed corrector $\Ve$ satisfies the estimate
$\| \Ve \|_\Le \le C \e^{3/4}$.
\label{corrector in rho e}
\el

\bpr Let $\chi\in C^\infty(\mathbb{R})$ and $\chi(t)=0,\ t<1$ and
$\chi(t)=1,\ t>2$. Let us define a family of cut-off functions:
$$
\chi_\e(x) = \chi\left( \e^{-1/2} \dist(x,\p \Om) \right),
\quad x \in  \Om.
$$
Then $\chi_\e: \Om \to \Real$ satisfies the properties
\begin{itemize}
\item $\chi_\e(x) = 0$ if $\dist(x,\p \Om) \le \e^{1/2}$,
\item $\chi_\e(x) = 1$ if $\dist(x,\p \Om) \ge 2\e^{1/2}$,
\item $| \nabla \chi_\e| \le C \e^{-1/2}$
and $|\text{supp}\,{\nabla \chi_\e}| \le C \e^{1/2}$,
\end{itemize}
where ``$\text{supp}$'' denotes a function support,
and $|\text{supp}\,{\cdot}|$
is the measure of the corresponding support.
Multiplying \eqref{corrector equation} by $\chi_\e^2 \Ve $ and
integrating by parts, we obtain \be \intl_{\Om} a_\e \nabla \Ve
\cdot \nabla(\chi_\e^2 \Ve ) \, dx + \intl_{\Om} \rho_\e \chi_\e^2
\Ve^2 \, dx = 0. \label{cutoff identity} \ee Then using the identity
$$
\nabla \Ve \cdot \nabla(\chi_\e^2 \Ve )
=
| \nabla(\chi_\e \Ve) |^2 - \Ve^2 | \nabla \chi_\e |^2,
$$
we get from  \eqref{cutoff identity}
 \be \intl_{\Om} a_\e |
\nabla(\chi_\e \Ve) |^2 \, dx + \intl_{\Om} \rho_\e \chi_\e^2 \Ve^2
\, dx = \intl_{\Om} a_\e \Ve^2 | \nabla \chi_\e |^2 \, dx,
\label{cutoff identity 2} \ee
implying
\be \intl_{\Om} \rho_\e \chi_\e^2
\Ve^2 \, dx \le \intl_{\Om} a_\e \Ve^2 | \nabla \chi_\e |^2 \, dx.
\label{cutoff estimate 1} \ee Lemma \ref{corrector estimate}
provides the estimate $\Ve^2 \le C \e^2$. Moreover,
$|\text{supp}\,{\nabla \chi_\e}| \le C \e^{1/2}$ and $| \nabla
\chi_\e |^2 \le C \e^{-1}$. Therefore estimate \eqref{cutoff
estimate 1} yields
    \be
    \| \chi_\e \Ve \|_\Le^2 = \intl_{\Om} \rho_\e
    \chi_\e^2 \Ve^2 \, dx \le C \e^{3/2}.
    \label{cutoff estimate 2}
    \ee
Similarly we estimate
    \be
    \| (1 - \chi_\e) \Ve \|_\Le^2 = \intl_{\Om}
    \rho_\e (1 - \chi_\e)^2 \Ve^2 \, dx \le C \e^{3/2},
    \label{aaa}
    \ee
since $|\text{supp}\,{(1- \chi_\e)}| \le C \e^{1/2}$
and $| \rho_\e | \le C \e^{-1}$.
Combining \eqref{cutoff estimate 2}  and \eqref{aaa}, we obtain $ \|
\Ve \|_\Le = \| (1 - \chi_\e) \Ve + \chi_\e \Ve \|_\Le \le C
\e^{3/4}. $ \epr

\bl
\label{trace estimate}
If $\f \in H^1_0(\Om)$ then
\be
    \Big( \intl_{\tGe}| \f |^2 \,
    dx \Big)^{1/2} \le C  \| \f \|_\He.
    \label{oc}
\ee
\el

\bpr
Extend function $\f$ by zero to whole of
$\mathbb{R}^n$. Then \eqref{oc} follows upon rescaling
$y = x / \e$ from the standard trace estimates applied to each connected
component of $\widetilde{Q}_0$ (which are shifts of $Q_0$).
\epr

\bl
The
corrected approximation $\Ue$ satisfies the estimate
\ben
\| \Be \Ue -
(\Lambda_\e + 1)^{-1} \Ue \|_\Le \leq \| \Be \Ue - (\Lambda_\e +
1)^{-1} \Ue \|_\He \le C \e^{3/4}.
\een
\label{quasimode}
\el

\bpr
For an arbitrary $\f \in \He$ consider
{\small
\begin{align}
|(\Be \Ue
&
- (\Lambda_\e + 1)^{-1} \Ue, \f)_\He |
= |\Lambda_\e + 1|^{-1}
|(\Ue - (\Lambda_\e + 1) \Be \Ue, \f)_\He|
\le
\n \\
& \le
C |(\Ue,\f)_\He - (\Lambda_\e + 1) (\Be \Ue, \f)_\He|
=
C |(\Ue,\f)_\He - (\Lambda_\e + 1) (\Ue, \f)_\Le |
=
\n \\
& =
C \left| \intl_\Om
a_\e \nabla \Ue \nabla \f \, dx
-\Lambda_\e
\intl_\Om
\rho_\e \Ue \f \, dx
\right|
\le
\n \\
& \le C \left| \ \intl_{\Om_0^\e\cup\Om_1^\e} \left( \di ( a_\e
\nabla \Ue ) + \Lambda_\e \rho_\e \Ue \right) \f \, dx \right| + C
\intl_{\tGe} \left| \left. a_\e \frac{\p \Ue}{\p n} \right|_0 -
\left. a_\e \frac{\p \Ue}{\p n} \right|_1 \right| | \f | \, dx.
\label{quasimode estimate 1}
\end{align}
}
Denote the right-hand side of  \eqref{quasimode estimate 1}
by $F_\e(\Ue, \f)$. Substituting $\Ue = \We + \Ve$ and
taking into account \eqref{corrector equation} and
\eqref{corrector bc},
\ben F_\e(\Ue, \f) \le
F_\e(\We, \f) + C \left| (\Lambda_\e + 1) \intl_\Om \rho_\e \Ve \f
\, dx \right| \le F_\e(\We, \f) + C \| \Ve \|_\Le \| \f \|_\Le. \een
By  Lemma \ref{corrector in rho e} and obvious
inequality $\| \f \|_\Le \le \| \f \|_\He$,
 \be F_\e(\Ue, \f)
\le F_\e(\We, \f) + C \e^{3/4} \| \f \|_\He. \label{Fe} \ee
According to  Lemma \ref{estimate in Ome1} $(i)$ and $(ii)$,
\ben F_\e(\We, \f) \le C \e \| \f
\|_{L^2(\Om)} + C \intl_{\tGe} \left| \left. a_\e \frac{\p \We}{\p
n} \right|_0 - \left. a_\e \frac{\p \We}{\p n} \right|_1 \right| |
\f | \, dx.
\een Due to Lemmas \ref{estimate in Ome1} $(iii)$ and \ref{trace estimate} the latter
yields
\be F_\e(\We, \f) \le C \e \| \f \|_\Le + C \e^{3/2} \Big(
\intl_{\tGe}| \f |^2 \, dx \Big)^{1/2} \le C \e \| \f \|_\He.
\label{Fe 4} \ee
Using \eqref{Fe 4}, \eqref{Fe} in
\eqref{quasimode estimate 1} yields
$$
|(\Be \Ue
- (\Lambda_\e + 1)^{-1} \Ue, \f)_\He |
\le C \e^{3/4} \| \f \|_\He
$$
for all $\f \in \He$.
Hence,
$\| \Be \Ue
- (\Lambda_\e + 1)^{-1} \Ue \|_\He \le C \e^{3/4}$.
\epr

\bl
$\| \Ue \|_\Le \ge C
\e^{-1/2}.$
\label{low boundary}
\el

\bpr  By the triangle inequality we  have
    \be \| \Ue \|_\He \ge \| \Ue \|_\Le \ge
\| \We \|_\Le - \| \Ve \|_\Le. \label{low 1} \ee
We consider
{\small
\begin{align}
\| \We \|_\Le^2
& =
\e^{-1}
\intl_{\Ome_0} \left|
w_0\left(x,\frac{x}{\e}\right)
+
\e
w_1\left(x,\frac{x}{\e}\right)
+
\e^2
w_2\left(x,\frac{x}{\e}\right)
\right|^2 \, dx
+
\n \\
& +
\intl_{\Ome_0} \left|
v^0(x)
+
\e
v_1\left(x,\frac{x}{\e}\right)
+
\e^2
v_2\left(x,\frac{x}{\e}\right)
\right|^2 \, dx
=
\n \\
& =
\e^{-1}
\intl_{\Ome_0} \left|
w_0\left(x,\frac{x}{\e}\right)
\right|^2 \, dx
+
O(1)
=
\e^{-1}
\intl_{\Ome_0} \left|
v^0(x) \eta\left(\frac{x}{\e}\right)
\right|^2 \, dx
+
O(1)
, \quad \e \to 0.
\label{We norm}
\end{align}
}
Extending $\eta: Q_0 \to \Real$
by zero onto entire periodicity cell $Q$,
by the mean value property we obtain
\be
\intl_{\Ome_0} \left|
v^0(x) \eta\left(\frac{x}{\e}\right)
\right|^2 \, dx
\quad
\rightarrow
\quad
C_*
:=
\langle \eta^2 \rangle
\intl_{\Om} \left|
v^0(x) \right|^2 \, dx
\quad \text{as} \quad
\e \to 0,
\label{mean value}
\ee
where $\langle \eta^2 \rangle$
is the mean value of function $\eta^2$ over $Q$,
namely
$$
\langle \eta^2 \rangle
=
\intl_Q \eta^2(y) \, dy =
\intl_{Q_0} \eta^2(y) \, dy \,>0.
$$
Since also $v^0(x)\not\equiv 0$,
$C_*$ is positive.
Therefore
\eqref{mean value}
and
\eqref{We norm}
yield
\be
\| \We \|_\Le = C_*^{1/2} \e^{-1/2} + o(\e^{-1/2}),
\quad \e \to 0.
\label{We norm 1}
\ee
Due to Lemma \ref{corrector in rho e}
and \eqref{We norm 1}, it follows
from \eqref{low 1} that
$
\| \Ue \|_\Le
\ge
C_*^{1/2} \e^{-1/2} + o(\e^{-1/2})
$
as
$\e \to 0$.
\epr

\begin{theorem} Let $\lm_0$ be a solution to \eqref{equation for lm0
2} such that $\lm_0\neq\lmd_j$  and $\lm_1$ is defined according to
\eqref{lm1}. Then

 1. For
sufficiently small $\varepsilon>0$
 there exists an eigenvalue $\lme$ of
\eqref{variational} such that
    \be
    |\lme - \lm_0 - \e \lm_1| \le C_1 \e^{5/4}, \label{lme estimation}
    \ee
with constant $C_1$ independent of $\varepsilon$.

2. Let $\We$ be defined by \eqref{Ue} and $\widetilde{\We} = \| \We
\|_\Le^{-1} \We $. Then  there exist constants $c_j(\varepsilon)$
such that
    \begin{equation}
        \Big\| \widetilde{\We} - \sum_{j\in J_\varepsilon}
        c_j(\varepsilon)u_j^\varepsilon \Big\|_{\Le}<C_2\varepsilon^{5/4},
        \label{eigest}
    \end{equation}
where
$J_\varepsilon=\{j:|\lambda_j^\varepsilon-\lm_0 - \e
\lm_1|<C\varepsilon^{5/4}\}$, and
$\lambda_j^\varepsilon$,
$u^\varepsilon_j$
are eigenvalues and ($\Le$-normalized)
eigenfunctions of \eqref{variational}, and the constants $C$ and
$C_2$ are independent of $\varepsilon$.
\label{t1}
\end{theorem}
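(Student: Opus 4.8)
The plan is to extract both assertions from the abstract spectral theory of the compact self-adjoint operator $\Be:\Le\to\Le$ introduced in \eqref{operator definition}, taking the quantitative estimates of Lemma \ref{estimate in Ome1} and Lemmas \ref{corrector estimate}--\ref{low boundary} as the only substantial input. By \eqref{operator formulation} the eigenvalues of \eqref{variational} are exactly $\lambda_j^\e=(\kappa_j^\e)^{-1}-1$, where $\{\kappa_j^\e\}$ are the strictly positive eigenvalues of $\Be$ and $\{u_j^\e\}$ the corresponding $\Le$-orthonormal eigenfunctions; so it suffices to describe the spectrum of $\Be$ near the value $(1+\Lambda_\e)^{-1}$. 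First I would pass to a \emph{normalised} quasimode $\widehat U_\e:=\|\Ue\|_{\Le}^{-1}\Ue$ and combine Lemma \ref{quasimode}, which gives
\ben
\|\Be\Ue-(1+\Lambda_\e)^{-1}\Ue\|_{\Le}\ \le\ \|\Be\Ue-(1+\Lambda_\e)^{-1}\Ue\|_{\He}\ \le\ C\e^{3/4},
\een
with the lower bound $\|\Ue\|_{\Le}\ge C\e^{-1/2}$ of Lemma \ref{low boundary}, to arrive at the key estimate
\ben
\|\Be\widehat U_\e-(1+\Lambda_\e)^{-1}\widehat U_\e\|_{\Le}\ \le\ C\e^{5/4},\qquad \|\widehat U_\e\|_{\Le}=1 .
\een
The extra factor $\e^{1/2}$ gained here over the ``raw'' residual $\e^{3/4}$ is precisely the effect of the large, $\sim\e^{-1/2}$, mass that the leading term $w_0=v^0\eta$ of $\We$ carries in the high-density inclusions; this is what forces the unusual exponent $5/4$.

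For part 1 I would invoke the elementary fact that a bounded self-adjoint operator $T$ with $\|Tu-mu\|\le\delta$, $\|u\|=1$, has spectrum within $\delta$ of $m$. Applied with $T=\Be$, $m=(1+\Lambda_\e)^{-1}$ and $\delta=C\e^{5/4}$ it produces a point of $\sigma(\Be)$ at distance $\le C\e^{5/4}$ from $(1+\Lambda_\e)^{-1}$. Since $(1+\Lambda_\e)^{-1}\to(1+\lm_0)^{-1}>0$ while $\delta\to0$, for all small $\e$ this point is a genuine eigenvalue $\kappa_k^\e$ bounded away from $0$ (rather than the accumulation point $0$), whence $\lme:=\lambda_k^\e=(\kappa_k^\e)^{-1}-1$ is bounded and
\ben
|\lme-\Lambda_\e|\ =\ \frac{|\kappa_k^\e-(1+\Lambda_\e)^{-1}|}{\kappa_k^\e\,(1+\Lambda_\e)^{-1}}\ \le\ C_1\e^{5/4},
\een
which is \eqref{lme estimation} because $\Lambda_\e=\lm_0+\e\lm_1$ by \eqref{Lme}.

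For part 2 I would first replace $\widetilde\We$ by $\widehat U_\e$: since $\Ue=\We+\Ve$ with $\|\Ve\|_{\Le}\le C\e^{3/4}$ by Lemma \ref{corrector in rho e}, while $\|\We\|_{\Le}$ and $\|\Ue\|_{\Le}$ are $\ge C\e^{-1/2}$ by Lemma \ref{low boundary}, a direct estimate gives $\|\widetilde\We-\widehat U_\e\|_{\Le}\le C\e^{5/4}$, so it is enough to approximate $\widehat U_\e$. Expanding $\widehat U_\e=\sum_j c_j(\e)u_j^\e$ with $c_j(\e)=(\widehat U_\e,u_j^\e)_{\Le}$ and $\sum_j c_j(\e)^2=1$, the key estimate of the first paragraph reads $\sum_j c_j(\e)^2\bigl(\kappa_j^\e-(1+\Lambda_\e)^{-1}\bigr)^2\le C^2\e^{5/2}$. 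Now $\kappa_j^\e-(1+\Lambda_\e)^{-1}$ and $\lambda_j^\e-\Lambda_\e$ are comparable as long as $\lambda_j^\e$ stays bounded, while for $\lambda_j^\e$ large (so $\kappa_j^\e$ near $0$) the difference $|\kappa_j^\e-(1+\Lambda_\e)^{-1}|$ is bounded below by a fixed positive constant; hence on the complement of $J_\e$, i.e.\ where $|\lambda_j^\e-\Lambda_\e|\ge C\e^{5/4}$, one has $|\kappa_j^\e-(1+\Lambda_\e)^{-1}|\ge c\,\e^{5/4}$. Substituting this into the displayed sum controls $\sum_{j\notin J_\e}c_j(\e)^2$, which is the square of the $\Le$-distance from $\widehat U_\e$ to $\mathrm{span}\{u_j^\e:j\in J_\e\}$; together with the reduction $\widetilde\We\approx\widehat U_\e$ this gives \eqref{eigest}.

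The spectral-perturbation bookkeeping in the last two paragraphs is routine; the real work, and the main obstacle, lies entirely upstream of it, in the estimates that feed the key inequality of the first paragraph. The two crucial ones are the control of the boundary-layer corrector $\Ve$ in the $\e$-weighted norm $\Le$ (Lemma \ref{corrector in rho e}, obtained from the cut-off identity together with the pointwise bound $\max_{\bar\Om}|\Ve|\le C\e$ of Lemma \ref{corrector estimate}, which itself rests on the transmission maximum principle and Hopf's lemma applied across $\tGe$), and the lower bound $\|\We\|_{\Le}\ge C\e^{-1/2}$ of Lemma \ref{low boundary}, whose explicit positive constant is forced by $v^0\not\equiv0$. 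It is exactly the competition between the $\e^{3/4}$ size of $\Ve$ in $\Le$ and the $\e^{-1/2}$ size of $\|\We\|_{\Le}$ that pins the error down to $\e^{5/4}$ rather than the naively expected $O(\e)$; once these facts and the asymptotic construction of Section \ref{formal case a} are in place, the remainder is standard self-adjoint spectral theory.
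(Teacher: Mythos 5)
Your proposal follows essentially the same route as the paper: the paper's proof of Theorem \ref{t1} consists precisely of applying the classical Vishik--Lyusternik lemma on approximate eigenvalues to $\Be$ with the normalised test function $\widetilde U_\e=\|\Ue\|_{\Le}^{-1}\Ue$ and approximate eigenvalue $(\Lambda_\e+1)^{-1}$, feeding in Lemmas \ref{quasimode} and \ref{low boundary} to get the $\e^{5/4}$ residual, and then converting $\mu_\e^{-1}=\lme+1$ and swapping $\widetilde U_\e$ for $\widetilde\We$ via Lemmas \ref{corrector in rho e} and \ref{low boundary}. You have simply unwound the cited lemma; part 1 and the reduction $\|\widetilde\We-\widehat U_\e\|_{\Le}\le C\e^{5/4}$ are correct as written.

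One caveat concerns the last step of your part 2, where your explicit version exposes a quantitative point that the paper's citation hides. From $\sum_j c_j(\e)^2\bigl(\kappa_j^\e-(1+\Lambda_\e)^{-1}\bigr)^2\le C_0^2\e^{5/2}$ and the lower bound $|\kappa_j^\e-(1+\Lambda_\e)^{-1}|\ge c\,\e^{5/4}$ for $j\notin J_\e$, you obtain only
$$\sum_{j\notin J_\e}c_j(\e)^2\ \le\ \Bigl(\frac{C_0}{c\,C}\Bigr)^2,$$
i.e.\ a bound of order one, not of order $\e^{5/2}$; so the computation you display does not by itself yield \eqref{eigest}, which requires the tail mass to be $O(\e^{5/2})$. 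The standard form of the approximate-eigenvalue lemma gives $\|u-P_{[m-d,m+d]}u\|\le \alpha/d$, so with residual $\alpha\sim\e^{5/4}$ one gets an $O(\e^{5/4})$ eigenfunction error only for a spectral window of \emph{fixed} width $d\sim 1$, not for the $O(\e^{5/4})$ window used to define $J_\e$. Closing this gap requires either enlarging $J_\e$ to a fixed-width window or invoking additional information on the spectrum of $\Be$ near $(\Lambda_\e+1)^{-1}$ (of the kind announced in Theorem \ref{2scalecomp}). Since the paper's own proof is a one-line appeal to the same lemma, it is subject to the identical caveat; but if you write the argument out, as you do, you should either restate $J_\e$ accordingly or supply the missing spectral-gap input.
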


\begin{proof}
 Application of classical lemma
on ``approximate eigenvalues", e.g. \cite{VishikLyusternik1}, with
$\widetilde{U}_\e = \| \Ue \|_\Le^{-1} \Ue $ as a test function and
$\Lambda_\e = \lm_0 + \e \lm_1$ as an approximate eigenvalue,
ensures,
via Lemmas \ref{quasimode} \& \ref{low boundary},
the existence of  an eigenvalue $\mu_\e$ of operator $\Be$
such that \be | (\Lambda_\e + 1)^{-1} - \mu_\e | \le C \e ^{5/4},
\label{eigenvalue 1} \ee and delivers the estimate analogous to
\eqref{eigest} with $u_j^\varepsilon$ being eigenfunctions of $\Be$
and $\widetilde{\We}$ replaced by
$\widetilde{U}_\e$. It suffices  to notice
that the eigenfunctions of the problem \eqref{variational} and of
operator $\Be$ coincide, their eigenvalues are related via
$\mu_\e^{-1} = \lme +1$ and that $\Le$ norm of the difference between
$\widetilde{U}_\e$ and $\widetilde{\We}$ can be estimated via the right hand
side of \eqref{eigest} (see Lemmas \ref{corrector in rho e} and
\ref{low boundary}).
\end{proof}

\begin{remark}
\label{second remark}
\emph{
Notice that \eqref{eigest} implies weaker but more
transparent interpretations on the approximate eigenfunctions.
For example, introducing
\be
u(x,y) \,=\,\left\{
\ba{ll}
\D v^0\left(x\right), & y\in Q_1,\\[3mm]
\D w_0\left(x,y\right), & y\in Q_0,
\ea
\right.
\label{2sclim}
\ee
we claim that
\be
\Big\| u\left(x,\frac{x}{\e}\right)
-
\sum_{j\in J_\varepsilon}
        d_j(\varepsilon)u_j^\varepsilon
\Big\|_{L^2(\Om)} \le C \e^{3/4},
\label{corrected theorem}
\ee
with appropriate $d_j(\varepsilon)$.
Note that $\| u( \cdot, \frac{\cdot}{\e})\|_{L^2(\Om)} \ge C_0 > 0$.
Then \eqref{corrected theorem} follows from
\eqref{eigest} by splitting its left hand side into the parts
corresponding to $\Ome_1$ and $\Ome_0$,
removing the weight, retaining only the main-order terms and
then adding the inequalities up.
}
\end{remark}

We also remark that, in principle, the result
\eqref{eigest} on the convergence of eigenfunctions
could be further sharpened,
e.g. using the technique of two-scale convergence,
cf. Section \ref{section: convergence} below and \cite{Cherd}.

\subsection{Case (b)}
In this section we assume that $\lm_0=\lmd_j$ for some $j$, its
multiplicity is equal to $1$ and the corresponding eigenfunction $\phi$
has zero mean, i.e. $\langle\phi\rangle=0$,
see Section \ref{section: formal case b}.

\begin{theorem}
\label{t2}
Let $c\in C^3(\Omega)$, $c=0$ on $\p \Omega$,
$\lm_0$
be not  a solution to \eqref{equation for lm0 2}
and $\lm_1$ be defined according to  \eqref{lm1p}.
Then there
exist $\e_0>0$  and  constants $C,C_1$ independent of $\varepsilon$
(but dependent on $c$) such that for any $0<\e\leq \e_0$,

 1.
 There exists an eigenvalue $\lme$ of
\eqref{variational} such that
    \be
    |\lme - \lm_0 - \e \lm_1| \le C \e^{5/4}. \label{lme estimationp}
    \ee

2. Let $\We^*$ be defined by \eqref{w'} and $\widetilde{\We^*} = \|
\We^* \|_\Le^{-1} \We^* $. Then  there exist constants
$c_j(\varepsilon)$ such that
    \begin{equation}
        \Big \|\widetilde{\We^*}-\sum_{j\in J_\varepsilon}
        c_j(\varepsilon)u_j^\varepsilon \Big\|_{\Le}<C_1\varepsilon^{5/4},
        \label{eigestp}
    \end{equation}
where $J_\varepsilon=\{j:|\lambda_j^\varepsilon-\lm_0 - \e
\lm_1|<C\varepsilon^{5/4}\}$, and $\lambda_j^\varepsilon,
u^\varepsilon_j(x)$ are eigenvalues and ($\Le$-normalized)
eigenfunctions of \eqref{variational}.
\end{theorem}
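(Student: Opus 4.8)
The plan is to run the proof of Theorem~\ref{t1} essentially verbatim, using the refined approximation $\We^*$ of \eqref{w'} in place of $\We$ and Lemma~\ref{est p} in place of Lemma~\ref{estimate in Ome1}. Passing to the operator formulation \eqref{operator definition}--\eqref{operator formulation}, it suffices to exhibit, for the positive compact self-adjoint operator $\Be$, a quasimode $\Ue^*\in H^1_0(\Om)$ together with the approximate eigenvalue $(\Lambda_\e+1)^{-1}$, $\Lambda_\e=\lm_0+\e\lm_1$, such that $\|\Be\Ue^*-(\Lambda_\e+1)^{-1}\Ue^*\|_\Le\le C\e^{3/4}$ and $\|\Ue^*\|_\Le\ge C\e^{-1/2}$; the classical lemma on approximate eigenvalues \cite{VishikLyusternik1}, applied to $\widetilde{\Ue^*}=\|\Ue^*\|_\Le^{-1}\Ue^*$, then produces an eigenvalue $\mu_\e$ of $\Be$ with $|\mu_\e-(\Lambda_\e+1)^{-1}|\le C\e^{5/4}$ and the associated spectral-cluster approximation, from which \eqref{lme estimationp} and \eqref{eigestp} follow after translating via $\mu_\e^{-1}=\lme+1$ and replacing $\widetilde{\Ue^*}$ by $\widetilde{\We^*}$.

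The first step is the boundary-layer correction. Since $c=0$ on $\p\Om$ we have $w_0|_{\p\Om}=c(x)\phi(x/\e)|_{\p\Om}=0$, so only the terms of $\We^*$ carrying an explicit factor of $\e$ survive on $\p\Om$ and $\max_{\p\Om}|\We^*|\le C\e$. Define $\Ve^*$ as the solution of the homogeneous transmission problem \eqref{corrector equation}--\eqref{corrector bc} with $\Ve^*|_{\p\Om}=-\We^*|_{\p\Om}$, and put $\Ue^*=\We^*+\Ve^*\in H^1_0(\Om)$. The maximum-principle and Hopf-lemma argument of Lemma~\ref{corrector estimate} carries over unchanged, as it uses only the transmission conditions and the smoothness of $a_\e,\rho_\e$ on each of $\Ome_1,\Ome_0$; hence $\max_{\bar\Om}|\Ve^*|\le\max_{\p\Om}|\We^*|\le C\e$. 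The cut-off computation of Lemma~\ref{corrector in rho e} then improves the trivial bound to $\|\Ve^*\|_\Le\le C\e^{3/4}$: testing \eqref{corrector equation} with $\chi_\e^2\Ve^*$ gives $\|\chi_\e\Ve^*\|_\Le^2\le\int_\Om a_\e|\Ve^*|^2|\nabla\chi_\e|^2\,dx\le C\e^{3/2}$ (since $a_\e=1$ near $\p\Om$, $|\Ve^*|^2\le C\e^2$, $|\nabla\chi_\e|^2\le C\e^{-1}$ and $|\mathrm{supp}\,\nabla\chi_\e|\le C\e^{1/2}$), while $\|(1-\chi_\e)\Ve^*\|_\Le^2\le\e^{-1}\cdot C\e^2\cdot C\e^{1/2}\le C\e^{3/2}$.

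Next comes the quasimode estimate. For arbitrary $\f\in\He$, exactly as in Lemma~\ref{quasimode}, integration by parts bounds $|(\Be\Ue^*-(\Lambda_\e+1)^{-1}\Ue^*,\f)_\He|$ by a constant times $F_\e(\Ue^*,\f)$, where $F_\e(u,\f)$ denotes the sum of $\int_{\Ome_0\cup\Ome_1}|\di(a_\e\nabla u)+\Lambda_\e\rho_\e u|\,|\f|\,dx$ and $\int_{\tGe}\bigl|\,a_\e\tfrac{\p u}{\p n}\big|_0-a_\e\tfrac{\p u}{\p n}\big|_1\bigr|\,|\f|\,dx$. Substituting $\Ue^*=\We^*+\Ve^*$ and using \eqref{corrector equation}--\eqref{corrector bc} gives $F_\e(\Ue^*,\f)\le F_\e(\We^*,\f)+C\|\Ve^*\|_\Le\|\f\|_\Le\le F_\e(\We^*,\f)+C\e^{3/4}\|\f\|_\He$. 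As for $F_\e(\We^*,\f)$: Lemma~\ref{est p}(ii) gives an $O(\e)$ integrand on $\Ome_1$, contributing $\le C\e\|\f\|_{L^2(\Ome_1)}\le C\e\|\f\|_\He$; Lemma~\ref{est p}(iii) gives an $O(\e^2)$ integrand on $\Ome_0$, and since $|\Ome_0|=O(1)$ while $\rho_\e=\e^{-1}$ there forces $\|\f\|_{L^2(\Ome_0)}\le\e^{1/2}\|\f\|_\Le$, this contributes $\le C\e^{5/2}\|\f\|_\He$; and Lemma~\ref{est p}(iv) combined with $|\tGe|\le C\e^{-1}$ and the trace estimate of Lemma~\ref{trace estimate} bounds the interfacial term by $C\e^2|\tGe|^{1/2}\bigl(\int_{\tGe}|\f|^2\bigr)^{1/2}\le C\e^{3/2}\|\f\|_\He$. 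Hence $F_\e(\We^*,\f)\le C\e\|\f\|_\He$, whence $\|\Be\Ue^*-(\Lambda_\e+1)^{-1}\Ue^*\|_\He\le C\e^{3/4}$. For the lower bound I would repeat Lemma~\ref{low boundary}: the matrix part of $\We^*$ is $O(\e)$, so $\|\We^*\|_\Le^2=\e^{-1}\int_{\Ome_0}|c(x)\phi(x/\e)|^2\,dx+O(1)$, and by the mean-value property $\int_{\Ome_0}|c(x)\phi(x/\e)|^2\,dx\to\langle\phi^2\rangle\int_\Om|c|^2\,dx>0$ (recall $\langle\phi^2\rangle=1$ and $c\not\equiv0$), so $\|\We^*\|_\Le=C_*^{1/2}\e^{-1/2}+o(\e^{-1/2})$ with $C_*>0$, and therefore $\|\Ue^*\|_\Le\ge\|\We^*\|_\Le-\|\Ve^*\|_\Le\ge C\e^{-1/2}$.

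Feeding these into the approximate-eigenvalue lemma yields $\mu_\e\in\sigma(\Be)$ with $|\mu_\e-(\Lambda_\e+1)^{-1}|\le\|\Ue^*\|_\Le^{-1}\|\Be\Ue^*-(\Lambda_\e+1)^{-1}\Ue^*\|_\Le\le C\e^{1/2}\cdot\e^{3/4}=C\e^{5/4}$; since $\mu_\e$ stays bounded away from $0$, translating via $\mu_\e^{-1}=\lme+1$ gives \eqref{lme estimationp} for the corresponding eigenvalue $\lme$ of \eqref{variational}, and the same lemma delivers \eqref{eigestp} with $\widetilde{\Ue^*}$ in place of $\widetilde{\We^*}$, the substitution being harmless because $\|\widetilde{\Ue^*}-\widetilde{\We^*}\|_\Le\le 2\|\Ve^*\|_\Le/\|\We^*\|_\Le=O(\e^{5/4})$. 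I do not expect a serious obstacle in this deduction, which is structurally identical to Case~(a); the only point needing care is that the large density weight $\rho_\e=\e^{-1}$ on $\Ome_0$ must not spoil the $\e^{3/4}$ quasimode bound, and it does not precisely because the interior residual there is one power of $\e$ smaller than on $\Ome_1$ (Lemma~\ref{est p}(iii) being better than Lemma~\ref{est p}(ii)) and $|\tGe|$ is only $O(\e^{-1})$. The genuinely hard work is hidden in Lemma~\ref{est p}, that is, in Appendix~B: since $\lm_0=\lmd_j$ lies in the Dirichlet spectrum of $Q_0$, the inclusion operators $-\Delta_y-\lm_0$ are not invertible, so at each order of the expansion one must project off $\phi$, and it is the resulting solvability conditions that pin down $\lm_1$ via \eqref{lm1p} and force the continuity $\We^*\in C(\Om)$.
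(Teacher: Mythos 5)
Your proposal is correct and follows exactly the route the paper intends: its proof of Theorem \ref{t2} is simply the statement that one repeats the proof of Theorem \ref{t1} verbatim with $\We^*$ and Lemma \ref{est p} replacing $\We$ and Lemma \ref{estimate in Ome1}, which is precisely what you carry out (your expanded details — the use of $c=0$ on $\p\Om$ to get $\max_{\p\Om}|\We^*|\le C\e$, the unchanged maximum-principle and cut-off arguments, the $F_\e$ bookkeeping, and the lower bound $\|\We^*\|_{\Le}\ge C\e^{-1/2}$ from $\langle\phi^2\rangle=1$ — are all consistent with the Case (a) proof being transplanted).
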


\bpr
Proof of this theorem literally follows the proof of Theorem
\ref{t1} with reference to Lemma \ref{est p}.
\epr

A direct analogue of Remark \ref{second remark}
also holds.

\section{On the eigenfunction convergence}
\label{section: convergence}

In this section we give a brief sketch of further refinement of the presented
results using the technique of two-scale convergence, \cite{Ngu, All, Zhikov2000}.

First, the inclusions intersecting or touching the boundary are ``excluded'',
e.g. by re-defining $a_\e$ and $\rho_\e$ there as in the matrix phase
($a_\e(x)=\rho_\e(x)=1$). Denoting now via $\e\to 0$ an appropriate subsequence in $\e$,
without relabelling, let $u_\e$ and $\lm^\e$ be eigenfunctions and eigenvalues
of the original problem, with normalization
\be
\int_{\Om_1^\e}\nabla u_\e^2\,+\,\e^2\int_{\Om_0^\e}\nabla u_\e^2\,=\,1.
\label{normln}
\ee
The boundedness of $u_\e$ in $L^2(\Om)$ is then implied by \eqref{normln} e.g. via
the uniform positivity of the double-porosity operator whose form is given by the
left hand side of \eqref{normln},
\cite[Thm 8.1]{Zhikov2000}. This implies that, up to a subsequence,
$u_\e\stackrel{2}\rightharpoonup u(x,y)$ and
$\e\nabla u_\e\stackrel{2}\rightharpoonup\nabla_y u(x,y)$, where $u\in L^2(\Omega, H^1_{per})$ and
$\stackrel{2}\rightharpoonup$ denotes weak two-scale convergence. Additionally,
since \eqref{normln} implies $\e\|\nabla u_\e\|_{L^2(\Om_1^\e)}\to 0$,
\cite[Thm 4.1]{Zhikov2000} assures that the two-scale limit is independent of $y$
in the matrix, i.e.
is exactly in the form \eqref{2sclim}.
Further, by \cite[Thm 4.2]{Zhikov2000}, $v^0\in H^1_0(\Omega)$ and
\be
\theta^\e_1\nabla u_\e\,\stackrel{2}\rightharpoonup
\theta_1(y)(\nabla v^0(x)+p(x,y)),
\label{2scgrad}
\ee
where $p\in L^2(\Omega, V_{pot})$ with
$\theta^\e_1$ and $\theta_1(y)$  denoting the characteristic functions of
$\Omega_1^\e$ and $Q_1$, respectively, and
$V_{pot}$ denoting the space of potential vector fields on $Q_1$, i.e.
with respect to the Lebesgue measure supported on $Q_1$, cf.
\cite[\S3.2]{Zhikov2000}.

Let $\lm^\e\to \lm_0$ and $(\lm^\e- \lm_0)/\e\to\lm_1$. Selecting then in \eqref{variational}
appropriate oscillating test functions $\phi=\phi_\e$ one can pass to the limit
recovering the weak forms of the equations derived in Section \ref{fae}.
For example, selecting $\phi_\e(x)=\e \psi(x)b(x/\e)$,
$\psi\in C^\infty_0(\Omega)$, $b(y)\in C^\infty_{per}(Q)$
yields
\[
\int_\Om\int_{Q_1}(\nabla v^0(x)+p(x,y))\cdot\nabla_yb(y)\psi(x)dydx+
\int_\Om\int_{Q_0}\nabla_y w_0(x,y)\cdot\nabla_yb(y)\psi(x)dydx\,=
\]
\be
=
\lambda_0 \int_\Om\int_{Q_0} w_0(x,y) b(y)\psi(x)dydx.
\label{2sclim1}
\ee
This can be seen to be a weak form of \eqref{problem w0} and  \eqref{problem v1}.
Selecting further $\phi_\e(x)= \psi(x)$ can be seen, after some careful
technical analysis, to recover \eqref{homogenized problem eq}, \eqref{homogenized problem bc} and
\eqref{nulam1}.

The above implies that as long as $(v^0)^2+w_0^2 \not \equiv 0$, $\lambda_0$, $\lambda_1$,
$v^0$ and $w_0$ can only be those constructed in Section \ref{fae}.
This does not however rule out the possibility that $v^0$ and $w_0$ are both trivial (equivalently,
the two-scale limit $u(x,y)$ is identically zero). Therefore additional
two-scale compactness type arguments are required, cf. \cite[Lemma 8.2]{Zhikov2000}.
In fact, following literally the argument of Zhikov one observes that the two-scale compactness
of the eigenfunctions does hold, i.e. $u_\e\stackrel{2}\rightarrow u(x,y)$, where
$\stackrel{2}\rightarrow$ denotes strong two-scale convergence,
 in particular there is a convergence
of norms:
\be
\|u_\e - u(x,x/\e)\|_{L^2(\Omega)} \to 0\,\, \mbox{  as }\,\e \to 0.
\label{convnorms}
\ee
 However, this in turn does not rule out the
possibility of $\|u_\e\|\to 0$ with the normalization \eqref{normln}, which requires
a separate analysis.

We announce here a partial result with this effect, postponing detailed discussions for future.

\begin{theorem}
\label{2scalecomp}
Let $\lambda_0$ be not an eigenvalue of the Dirichlet problem in $Q_0$, i.e. $\lambda_0\neq
\lambda^D_j$, $j\geq 1$, see \eqref{auxiliary phi}. Then
\begin{enumerate}

\item[(i)] In the above setting, necessarily, $\beta(\lambda_0)\geq |Q_1|\lambda_0$, i.e.
there are gaps developed for small enough $\e$ in the spectrum, containing in the limit at least
$\{\lambda: \beta(\lambda) < |Q_1|\lambda\}$.

\item[(ii)] If $\beta(\lambda_0) = |Q_1|\lambda_0$, necessarily $u(x,y)\not\equiv 0$.
Consequently, $\lambda_1$ can only be one of those described by \eqref{lm1}.
The eigenfunctions converge strongly, in particular \eqref{convnorms} holds. For fixed
$\lambda_0$ and $\lambda_1$, for small enough $\e$
the multiplicity of the eigenvalues $\lme$
near $\Lambda_\e=\lambda_0+\e\lambda_1$
coincides with the multiplicity of $\nu$ as an eigenvalue of
\eqref{homogenized problem eq}, \eqref{homogenized problem bc}.

\end{enumerate}
\end{theorem}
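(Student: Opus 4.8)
The plan is to combine the two-scale passage to the limit sketched above with a two-scale \emph{corrector} estimate that rules out the degenerate case of a trivial two-scale limit, and then to read off both parts from the structure of the limit equations of Section~\ref{fae}. Fix a sequence of eigenpairs $(\lm^\e,u_\e)$ of \eqref{variational} normalised by \eqref{normln}, with (along a subsequence) $\lm^\e\to\lm_0$ and $(\lm^\e-\lm_0)/\e\to\lm_1$, so that $u_\e\stackrel{2}{\rightharpoonup}u$ with $u$ of the form \eqref{2sclim}, $v^0\in H^1_0(\Om)$, \eqref{2scgrad} holding for some $p\in L^2(\Om,V_{pot})$, and $(v^0,w_0,p,\lm_0,\lm_1)$ satisfying the weak forms of \eqref{problem w0}, \eqref{orthogonality w0 2}, the cell identity characterising $p$, and — taking $\phi_\e=\psi(x)$ in \eqref{variational} — of \eqref{homogenized problem eq}--\eqref{homogenized problem bc} with $\nu=\nu(\lm_1)$. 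Since $\lm_0\notin\sigma_D$, problem \eqref{probleta} is uniquely solvable, \eqref{problem w0} forces $w_0(x,y)=v^0(x)\eta(y)$, and the weak form of \eqref{orthogonality w0 2} becomes $B(\lm_0)\,v^0\equiv 0$ with $B$ as in \eqref{bkslambda}. Thus exactly one alternative holds: either $v^0\not\equiv0$, whence $B(\lm_0)=0$, i.e.\ $\beta(\lm_0)=|Q_1|\lm_0$; or $v^0\equiv0$, whence $w_0\equiv0$ and $u\equiv0$.

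The heart of the proof is to exclude the second alternative whenever $\lm_0\notin\sigma_D$. For this I would upgrade the qualitative two-scale convergence to strong (corrector-type) two-scale convergence of the scaled gradients, $\theta_1^\e\nabla u_\e\stackrel{2}{\rightarrow}\theta_1(y)(\nabla v^0+p)$ and $\e\,\theta_0^\e\nabla u_\e\stackrel{2}{\rightarrow}\theta_0(y)\nabla_y w_0$, in particular with convergence of the corresponding $L^2(\Om)$-norms. This follows, along the lines of \cite{Zhikov2000}, from the energy identity ($\phi=u_\e$ in \eqref{variational}) together with the weak two-scale limits and a div--curl argument; the only genuinely new point is to carry the $\e^{-1}$-weighted density term along, which is harmless since $\rho_\e u_\e$ two-scale converges and is paired with $u_\e$. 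Granting this, suppose $u\equiv0$. Then $v^0\equiv0$, $w_0\equiv0$, and passing to the limit in \eqref{2sclim1} with an arbitrary periodic $b$ gives $\int_{Q_1}p(x,\cdot)\cdot\nabla_y b\,dy=0$ for a.e.\ $x$, so $p(x,\cdot)$ is simultaneously potential and solenoidal on $Q_1$, hence $p\equiv0$. Convergence of norms then yields $\int_{\Om_1^\e}|\nabla u_\e|^2\to\int_\Om\!\!\int_{Q_1}|\nabla v^0+p|^2=0$ and $\e^2\!\int_{\Om_0^\e}|\nabla u_\e|^2\to\int_\Om\!\!\int_{Q_0}|\nabla_y w_0|^2=0$, contradicting \eqref{normln}. (If one prefers to bypass the full corrector statement, the same contradiction can be obtained, more laboriously, from \eqref{normln}, the energy identity $\int a_\e|\nabla u_\e|^2=\lm^\e\int\rho_\e u_\e^2$, and the uniform solvability of the rescaled inclusion problems $-\Delta_y\hat u_m=\lm^\e\hat u_m$ in $Q_0$ — available because $\lm^\e\to\lm_0\notin\sigma_D$ — which bounds the inclusion $L^2$-mass by the interfacial trace and hence by the matrix energy; this is the technically heaviest step.)

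Given that $u\not\equiv0$ the two parts follow quickly. For (i): if $\lm_0\notin\sigma_D$ and some eigenvalue sequence converges to $\lm_0$, then by the above $v^0\not\equiv0$ and $\beta(\lm_0)=|Q_1|\lm_0$, in particular $\beta(\lm_0)\ge|Q_1|\lm_0$. Contrapositively, no eigenvalue of \eqref{variational} converges to a point $\lm_0\notin\sigma_D$ with $\beta(\lm_0)<|Q_1|\lm_0$; a compactness argument then shows that any compact subset of $\{\lm:\beta(\lm)<|Q_1|\lm\}$ is free of spectrum of \eqref{variational} for all small $\e$, so these gaps cover $\{\lm:\beta(\lm)<|Q_1|\lm\}$ in the limit. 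For (ii): if $\beta(\lm_0)=|Q_1|\lm_0$ and $\lm_0\notin\sigma_D$, again $v^0\not\equiv0$; from the first paragraph $v^0\in H^1_0(\Om)$ solves \eqref{homogenized problem eq}--\eqref{homogenized problem bc} with $\nu=\nu(\lm_1)$, so $\nu$ equals some $\nu_n$ and $\lm_1$ is of the form \eqref{lm1}; strong two-scale convergence of $u_\e$, in particular \eqref{convnorms}, is the compactness statement recalled in the text.

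It remains to match multiplicities, which I would do by a two-sided count. For the lower bound, apply Theorem~\ref{t1} to each member of an $L^2(\Om)$-orthonormal basis $v^0_1,\dots,v^0_K$ of the $\nu$-eigenspace ($K=\mathrm{mult}\,\nu$): this yields $K$ quasimodes whose $\Le$-Gram matrix is, by the norm computation underlying Lemma~\ref{low boundary}, $\langle\eta^2\rangle\,\e^{-1}(\delta_{km}+o(1))$, with residuals $O(\e^{5/4})$ relative to $\Lambda_\e$; fed into the counting form of the approximate-eigenvalue lemma (cf.\ \cite{VishikLyusternik1}) they produce at least $K$ eigenvalues of \eqref{variational}, with multiplicity, within $C\e^{5/4}$ of $\Lambda_\e$. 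For the upper bound, if $K'>K$ such eigenvalues existed, take $\Le$-orthonormal eigenfunctions; by the argument of the second paragraph each has a nontrivial two-scale limit, necessarily $(v^0_r,v^0_r\eta)$ with $v^0_r$ a $\nu$-eigenfunction, and the same Gram computation forces the $v^0_r$ to be $L^2(\Om)$-orthonormal, so $K'\le K$, a contradiction. The main obstacle throughout is the corrector estimate of the second paragraph — equivalently, preventing the normalisation energy from escaping to scales finer than those resolved by $u$ — in the presence of the singular $\e^{-1}$ density weight; this is precisely the point flagged above as needing ``a separate analysis'', and everything else is bookkeeping over Section~\ref{fae} and the two-scale calculus of \cite{Zhikov2000}.
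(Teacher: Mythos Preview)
The paper does not actually prove Theorem~\ref{2scalecomp}: it is explicitly ``announced'' with ``detailed discussions postponed for future''. So there is no argument in the paper to compare your proposal against; your sketch is, in effect, an attempt to supply the missing proof.

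Your outline correctly isolates the architecture: once one knows $u\not\equiv 0$, parts (i) and (ii) are indeed bookkeeping over Section~\ref{fae} and standard quasimode counting, and your multiplicity argument via Theorem~\ref{t1} plus the Gram-matrix computation from Lemma~\ref{low boundary} is sound. The genuine gap is in the second paragraph, and it is more serious than you indicate. Your primary route claims strong two-scale convergence of $\theta_1^\e\nabla u_\e$ and $\e\theta_0^\e\nabla u_\e$ ``from the energy identity along the lines of \cite{Zhikov2000}'', with the $\e^{-1}$ density ``harmless since $\rho_\e u_\e$ two-scale converges''. But $\rho_\e u_\e$ is \emph{not} bounded in $L^2(\Om)$ under \eqref{normln} (only $u_\e$ is), so this convergence is not available; and, more structurally, the energy form here carries weight $\e$ on the inclusion gradients while the normalisation \eqref{normln} carries $\e^2$, so the Zhikov corrector argument, which relies on these two forms coinciding, does not transfer literally. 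This mismatch of weights is exactly why the paper singles the step out as ``requiring a separate analysis''.

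Your alternative route via the rescaled inclusion problems is closer to a workable argument and does give $\e^2\int_{\Om_0^\e}|\nabla u_\e|^2\le C\big(\e^2\int_{\Om_1^\e}|\nabla u_\e|^2+\int_{\Om_1^\e}u_\e^2\big)\to 0$ when $u\equiv0$; but then \eqref{normln} forces $\int_{\Om_1^\e}|\nabla u_\e|^2\to 1$, and you still need to exclude this, i.e.\ to prove strong convergence of $\theta_1^\e\nabla u_\e$ without the energy identity doing the work for you. That step --- ruling out concentration of the matrix gradient energy when the two-scale limit vanishes --- is not supplied, and is the substantive content the paper defers.
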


We remark that the above statement does not provide a full analogue of Hausdorff convergence
of the spectra as in the double porosity case \cite[Thm 8.1]{Zhikov2000}. It does ensure however
the existence of the gaps (on Figure \ref{fig-1}, $(\lambda_j^D, \mu_{j+1})$, $j\geq 1$)
and of the spectrum accumulation near the left ends $\mu_j$, $j\geq 1$, of the ``bands''
$[\mu_j, \lambda_j^D]$.
However it does not clarify whether the ``rests'' of the bands,
$(\mu_j, \lambda_j^D]$ could be accumulation points.
We conjecture that they could.  For a
chosen $\lambda_0=\mu_j$ there exist infinitely many $\lambda_1=\lambda_1^{(n)}$
according to \eqref{lm1}, \eqref{nun}, and $\lambda_1^{(n)}\to+\infty$ as $n\to\infty$.
On any band, for any small enough $\e$ there exists a finite but infinitely increasing number
$N(\e)$ of eigenvalues according to \eqref{lme estimation}. The issue is hence, in a sense, whether
$\e\lambda_1^{(n)}$ may become of order one for large $n$ ($n\sim N(\e)$).
For $\lambda_1^{(n)}\sim \e^{-1}$, according to \eqref{lm1} $\nu\sim \e^{-1}$, and hence, formally,
the solutions $v^0$ of the homogenized equation \eqref{homogenized problem eq}  becomes oscillatory
on the scale $x/\e^{1/2}$. One can attempt deriving asymptotic expansions similarly to those in
Section 3, involving this new scale. A preliminary analysis has shown that those have formal
solutions near every point inside the band. More detailed analysis is beyond the scope of
the present work.

\appendix

\section[Derivation of the limit equation]{Derivation of the limit equation for $v_0$.}
\setcounter{equation}{0}
\renewcommand{\theequation}{\mbox{\thesection.\arabic{equation}}}

Since
$$
- \intl_{\Gamma} \frac{\p v_1}{\p \nx}\, dy = - \intl_{\Gamma} \frac{\p v_1}{\p x_j} n_j \, dy =
\intl_{Q_1} \frac{\p^2 v_1}{\p x_j \p y_j}\, dy,
$$
\eqref{solvability v2} transforms to
$$
( \Delta_x v_0 + \lm_0 v_0 )|Q_1| + \intl_{Q_1} \frac{\p^2 v_1}{\p x_j \p y_j} \, dy =
\intl_{\Gamma} \left( \frac{\p w_1}{\p \ny} + \frac{\p w_0}{\p \nx} \right)\, dy.
$$
Taking into account \eqref{representation v1}
and \eqref{case1w0} this becomes
\bea
 & \D( \Delta_x v_0 + \lm_0 v_0 )|Q_1| + \frac{\p^2 v_0}{\p x_j \p x_k} \intl_{Q_1} \frac{\p N_k}{\p y_j} \, dy =&
 \nonumber \\
 & = \D \frac{\p v_0}{\p x_j} \Big(  - \intl_{Q_1} \frac{\p \cN}{\p y_j} \, dy +  \intl_{\Gamma} \eta n_j \, dy \Big)
  + \intl_{\Gamma} \frac{\p w_1}{\p \ny} \, dy. &
  \label{equation v0}
\eea
Since $\eta(y)=1$ on $\Gamma$,
\be
\intl_{\Gamma} \eta n_j \, dy = \intl_{\Gamma} n_j \, dy =0.
\label{simplification 1}
\ee
We introduce homogenized matrix $\Ah=(\Ah_{jk})_{j,k=1}^n$  by \eqref{A hom}.
According to \eqref{representation w1} we have
\be
\intl_{\Gamma} \frac{\p w_1}{\p \ny} \, dy =
\frac{\p v^0}{\p x_j} \intl_{\Gamma} \frac{\p \cM_j}{\p \ny} \, dy +
v^0 \intl_{\Gamma}  \frac{\p \cP}{\p \ny} \, dy +
\lm_1 v^0 \intl_{\Gamma} \frac{\p \cR}{\p \ny}\, dy.
\label{representation w1/n}
\ee
Substituting \eqref{simplification 1} -- \eqref{representation w1/n} into \eqref{equation v0} yields
\be
-\di \Ah \nabla_x v^0 = \nu(\lm_1) v^0 +
\cK_j \frac{\p v^0}{\p x_j}\quad\mbox{in}\quad \Om,
\label{homogenized equation}
\ee
with
$$
\nu(\lm_1) = \lm_0|Q_1| - \lm_1 \intl_{\Gamma} \frac{\p \cR}{\p \ny}\, dy -
\intl_{\Gamma}  \frac{\p \cP}{\p \ny} \, dy
$$
and
\be
\cK_j = \intl_{Q_1} \frac{\p \cN}{\p y_j} \, dy
- \intl_{\Gamma} \frac{\p \cM_j}{\p \ny} \, dy.
\label{cK}
\ee

\begin{lemma}
$\nu(\lm_1)$ depends on $\lm_1$ with a non-zero linear coefficient.
\end{lemma}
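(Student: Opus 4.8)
The plan is to isolate the coefficient of $\lm_1$ in $\nu(\lm_1)$ and to identify it, up to sign, with the strictly positive constant $\cC=\int_{Q_0}\eta^2\,dy$ from \eqref{cgarnaja}. From the expression for $\nu(\lm_1)$ obtained just above, namely $\nu(\lm_1)=\lm_0|Q_1|-\lm_1\int_\Gamma \frac{\p\cR}{\p\ny}\,dy-\int_\Gamma\frac{\p\cP}{\p\ny}\,dy$, and since the problems \eqref{problem cR} and \eqref{problem cP} defining $\cR$ and $\cP$ involve only $\lm_0$ and $\eta$ (not $\lm_1$), the linear coefficient is exactly $-\int_\Gamma \frac{\p\cR}{\p\ny}\,dy$; it remains to compute this boundary integral and show it is nonzero.

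First I would pair the equation for $\cR$, $-\Delta_y\cR-\lm_0\cR=\eta$ in $Q_0$, with $\eta$: multiplying by $\eta$, integrating over $Q_0$ and applying Green's first identity with $n_y$ the outward normal on $\Gamma=\p Q_0$ and using $\eta|_\Gamma=1$ from \eqref{probleta}, one gets $\int_{Q_0}\nabla\cR\cdot\nabla\eta\,dy-\int_\Gamma\frac{\p\cR}{\p\ny}\,dy-\lm_0\int_{Q_0}\cR\,\eta\,dy=\int_{Q_0}\eta^2\,dy$. Next I would pair the equation for $\eta$, $-\Delta_y\eta=\lm_0\eta$, with $\cR$: multiplying by $\cR$, integrating over $Q_0$ and integrating by parts, the boundary term drops out because $\cR|_\Gamma=0$ by \eqref{problem cR}, leaving $\int_{Q_0}\nabla\eta\cdot\nabla\cR\,dy=\lm_0\int_{Q_0}\eta\,\cR\,dy$. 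Subtracting the second identity from the first, the Dirichlet-form term and the term $\lm_0\int_{Q_0}\eta\,\cR$ cancel, and one is left with $-\int_\Gamma\frac{\p\cR}{\p\ny}\,dy=\int_{Q_0}\eta^2\,dy=\cC$.

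Hence the coefficient of $\lm_1$ in $\nu(\lm_1)$ equals $\cC$, which is strictly positive because $\eta$ cannot be identically zero, as it equals $1$ on $\Gamma$. This both confirms formula \eqref{nulam1} and proves that $\lm_1\mapsto\nu(\lm_1)$ is affine with nonzero slope, as claimed. The remaining, $\lm_1$-independent, terms could be identified with $\lm_0\bigl(|Q_1|+\int_{Q_0}\cP\,dy\bigr)$ by an entirely analogous but simpler Green's identity pairing $\cP$ from \eqref{problem cP} with $\eta$, but this is not needed here. I do not anticipate any genuine obstacle beyond careful bookkeeping of the boundary terms and the orientation of $n_y$; the only structural point to keep in mind is that both $\eta$ and $\cR$ are well defined precisely because we are in case (a), $\lm_0\notin\sigma_D$, so that \eqref{probleta} and \eqref{problem cR} are uniquely solvable.
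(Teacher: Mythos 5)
Your proposal is correct and follows essentially the same route as the paper: the paper evaluates $\cC=-\int_{\Gamma}\frac{\p \cR}{\p \ny}\,dy$ by a single application of Green's second identity to the pair $(\cR,\eta)$, which is exactly your two applications of Green's first identity followed by subtraction, yielding $\cC=\int_{Q_0}\eta^2\,dy>0$. No gaps.
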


\begin{proof}
We estimate the linear coefficient
$$
\cC := - \intl_{\Gamma} \frac{\p \cR}{\p \ny}\, dy.
$$
Note that
\ben
\cC =
\intl_{\Gamma} \Big( \cR \frac{\p \eta}{\p \ny} - \eta \frac{\p \cR}{\p \ny} \Big)\, dy=
\intl_{Q_0} ( \cR \Delta_y \eta - \eta \Delta_y \cR )\, dy
= \intl_{Q_0} \eta^2 \, dy > 0,
\een
where \eqref{problem cR} and \eqref{probleta} have been used.
Thus
$$
\nu(\lm_1) = \cC \lm_1+ \lm_0|Q_1| -
\intl_{\Gamma}  \frac{\p \cP}{\p \ny} \, dy
$$
with positive constant $\cC$ (depending on the choice of $\lm_0$).
\end{proof}

\begin{corollary}
\label{corollary nu}
(i) If $\lm_0=0$ then $\eta(y)\equiv 1$ and hence $\cC=|Q_0|$.

(ii) According to  \eqref{problem cP} we also have
the representation
\be
\nu(\lm_1) = \cC \lm_1+ \lm_0 \Big( |Q_1| + \intl_{Q_0} \cP \, dy \Big).
\label{nulam}
\ee
\end{corollary}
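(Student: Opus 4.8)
The plan is to read both assertions directly off the defining problems \eqref{probleta}, \eqref{problem cP} together with the formula for $\nu(\lm_1)$ already obtained in the proof of the preceding lemma, namely
\[
\nu(\lm_1)=\cC\lm_1+\lm_0|Q_1|-\intl_{\Gamma}\frac{\p\cP}{\p\ny}\,dy .
\]
For part (i), I would note that when $\lm_0=0$ the auxiliary problem \eqref{probleta} reduces to $-\Delta_y\eta=0$ in $Q_0$ with $\eta|_{\Gamma}=1$; by uniqueness for the Dirichlet Laplacian the constant function solves this, so $\eta\equiv 1$, and then \eqref{cgarnaja} gives $\cC=\intl_{Q_0}\eta^2\,dy=|Q_0|$.

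For part (ii) it suffices to rewrite the boundary term $-\intl_{\Gamma}\frac{\p\cP}{\p\ny}\,dy$ as a volume integral. Integrating the equation $-\Delta_y\cP=\lm_0\cP$ from \eqref{problem cP} over $Q_0$ and applying the divergence theorem, with $\ny$ the outward unit normal to $Q_0$ on $\Gamma$, yields
\[
\intl_{\Gamma}\frac{\p\cP}{\p\ny}\,dy=\intl_{Q_0}\Delta_y\cP\,dy=-\lm_0\intl_{Q_0}\cP\,dy .
\]
Substituting this into the displayed expression for $\nu(\lm_1)$ gives precisely \eqref{nulam}.

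Neither step presents a genuine obstacle; the whole argument is two lines of Green's/divergence identities plus a uniqueness statement. The only point that requires a modicum of care is the orientation convention for $\ny$ on $\Gamma$ (the outward normal to $Q_0$, fixed in Section~2): it must be exactly the one used when $\cC$ was expressed via the Green identity in the lemma, so that the signs in the two computations are mutually consistent.
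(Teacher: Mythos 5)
Your proof is correct and is exactly the argument the paper intends (the corollary is stated without an explicit proof, the phrase ``according to \eqref{problem cP}'' pointing precisely to the Green/divergence identity you wrote out): $\eta\equiv 1$ by uniqueness when $\lm_0=0$, and $\intl_{\Gamma}\p\cP/\p\ny\,dy=-\lm_0\intl_{Q_0}\cP\,dy$ converts the boundary term into the volume term in \eqref{nulam}. The signs and the orientation of $\ny$ are handled consistently, so nothing further is needed.
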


\begin{lemma}
All  $\cK_j$ defined by \eqref{cK} equal zero.
\end{lemma}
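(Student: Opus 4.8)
The plan is to prove $\cK_j=0$ by computing the two integrals in \eqref{cK} separately and showing that each of them equals $-\intl_{\tG}\cN\, n_j\, dy$. All the steps are integrations by parts, with the auxiliary cell problems set up precisely so that the resulting boundary terms match; the only real care needed is the consistent orientation of $\tG$ (the normal $\ny$ points out of $Q_0$, so $-\ny$ points out of $Q_1$), and the solutions are smooth enough near $\tG$ for the identities by elliptic regularity since $\tG$ is $C^2$.

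For the volume term, $\intl_{Q_1}\frac{\p\cN}{\p y_j}\, dy=\intl_{\p Q_1}\cN\,\nu_j\, dS$ where $\nu$ is the outward unit normal of $Q_1$; since $\cN$ is $Q$-periodic the contributions of opposite faces of $\p Q$ cancel, while on $\tG$ one has $\nu=-\ny$, so $\intl_{Q_1}\frac{\p\cN}{\p y_j}\, dy=-\intl_{\tG}\cN\, n_j\, dy$. For the interfacial term I would first apply Green's second identity on $Q_0$ to the pair $(\cM_j,\eta)$: by \eqref{problem cM} and \eqref{probleta} the $\lm_0$-terms cancel, so $\intl_{Q_0}(\cM_j\Delta_y\eta-\eta\Delta_y\cM_j)\, dy=\intl_{Q_0}\frac{\p(\eta^2)}{\p y_j}\, dy=\intl_{\tG}\eta^2 n_j\, dy=\intl_{\tG}n_j\, dy=0$, using $\eta\equiv 1$ on $\tG$ and \eqref{simplification 1}. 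Hence $\intl_{\tG}\cM_j\frac{\p\eta}{\p\ny}\, dy=\intl_{\tG}\eta\frac{\p\cM_j}{\p\ny}\, dy=\intl_{\tG}\frac{\p\cM_j}{\p\ny}\, dy$; and using the boundary conditions $\cM_j|_{\tG}=N_j|_{\tG}$ of \eqref{problem cM} and $\frac{\p\cN}{\p\ny}|_{\tG}=\frac{\p\eta}{\p\ny}|_{\tG}$ of \eqref{problem cN}, this becomes $\intl_{\tG}\frac{\p\cM_j}{\p\ny}\, dy=\intl_{\tG}N_j\frac{\p\cN}{\p\ny}\, dy$.

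Finally I would apply Green's second identity on $Q_1$ to $(N_j,\cN)$, both of which are harmonic there by \eqref{problem Nj} and \eqref{problem cN}; periodicity again annihilates the $\p Q$-contributions, so $\intl_{\tG}N_j\frac{\p\cN}{\p\ny}\, dy=\intl_{\tG}\cN\frac{\p N_j}{\p\ny}\, dy=-\intl_{\tG}\cN\, n_j\, dy$ by the Neumann condition in \eqref{problem Nj}. Thus both integrals in \eqref{cK} equal $-\intl_{\tG}\cN\, n_j\, dy$, and therefore $\cK_j=0$. I expect the main obstacle to be nothing more than keeping the signs consistent across the two Green identities; conceptually the vanishing is forced by the structure of the cell problems, and in particular it confirms that the homogenized equation \eqref{homogenized equation} has no first-order term, reducing to \eqref{homogenized problem eq}.
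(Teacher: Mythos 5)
Your proof is correct and follows essentially the same route as the paper's: the divergence theorem with periodicity for the volume term, Green's identity on $Q_0$ for the pair $(\cM_j,\eta)$ (the paper's auxiliary identity \eqref{aux1}), Green's identity on $Q_1$ for the harmonic pair $(N_j,\cN)$, and the boundary conditions of \eqref{problem Nj}--\eqref{problem cR} to match the terms. The signs and orientation conventions all check out.
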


\begin{proof}
First we prove an auxiliary identity, namely
\be
\intl_{\Gamma} \left( \frac{\p \cM_j}{\p \ny} \eta -
\cM_j \frac{\p \eta}{\p \ny}\right) \, dy
= 0.
\label{aux1}
\ee
Notice for this that the left-hand side is
$$
\intl_{Q_0} \left( \Delta \cM_j \eta - \cM_j \Delta \eta \right) \, dy =
- \intl_{Q_0} 2 \frac{\p \eta}{\p y_j} \eta \, dy
= - \intl_{Q_0} \frac{\p \eta^2}{\p y_j} \, dy,
$$
where equations \eqref{problem cM} and \eqref{probleta} have been used.
Since $\eta(y)=1$ on $\Gamma$,
$$
\intl_{Q_0} \frac{\p \eta^2}{\p y_j} \, dy =
\intl_{\Gamma} \eta^2 n_j \, dy =
\intl_{\Gamma} n_j \, dy = 0,
$$
which finishes the proof of \eqref{aux1}.

Then consider
$$
\cK_j = - \intl_{\Gamma} \cN n_j \, dy -
\intl_{\Gamma} \frac{\p \cM_j}{\p \ny} \eta \, dy,
$$
which, according to \eqref{problem Nj} and \eqref{aux1}, yields
\be
\cK_j = \intl_{\Gamma} \cN \frac{\p N_j}{\p \ny}  \, dy
- \intl_{\Gamma} \cM_j \frac{\p \eta}{\p \ny}  \, dy.
\label{aux cK}
\ee
Since $\cN$ and $N_j$ are both harmonic in $Q_0$,
\be
\intl_{\Gamma} \cN \frac{\p N_j}{\p \ny}  \, dy =
\intl_{\Gamma} N_j \frac{\p \cN}{\p \ny}  \, dy.
\label{aux2}
\ee
Using  \eqref{problem cM} and \eqref{problem cN} we obtain
\be
\intl_{\Gamma} \cM_j \frac{\p \eta}{\p \ny}  \, dy =
\intl_{\Gamma} N_j \frac{\p \cN}{\p \ny}  \, dy.
\label{aux3}
\ee
Substitution of \eqref{aux2} and \eqref{aux3} into \eqref{aux cK}
proves the lemma.
\end{proof}

Finally we come to the formulation of homogenized problem for the function $v_0$,
which comes from \eqref{homogenized equation} and boundary condition \eqref{boundary condition},
resulting in \eqref{homogenized problem eq}-\eqref{homogenized problem bc}.

\section{Proof of Lemma \protect{\ref{est p}}}
\setcounter{equation}{0}
\renewcommand{\theequation}{\mbox{\thesection.\arabic{equation}}}

We look for  $v_1,w_1,v_2,w_2$  in the form
    \be
        v_1(x,y)=c(x)\mathcal{V}_1(y),
    \ee
    \be
        w_1(x,y)=c(x)\mathcal{W}_1(y)+\frac{\partial c(x)}{\partial
        x_k} \mathcal{Z}^{(k)}_1(y),
    \ee
    \be
        v_2(x,y)=c(x)\mathcal{V}_2(y)+\frac{\partial c(x)}{\partial
        x_k} \mathcal{P}_k(y),
    \ee
    \be
        w_2(x,y)=c(x)\mathcal{W}_2(y)+\frac{\partial c(x)}{\partial
        x_k} \mathcal{Z}^{(k)}_2(y),
    \ee
where  $c$ is an arbitrary smooth function in $\Omega$ and
 $\mathcal{V}_i,\mathcal{W}_i,\mathcal{Z}^{(k)}_i,\mathcal{P}_k$, $i=1,2\ ,\  \ k=1,..,n$, are functions to be found.

 Applying differential operator
${\Ae}$ given by \eqref{leps} to \eqref{w'} in $\Ome_1$ we obtain
{\small
\begin{align}
\di( & a_\e \nabla \We^* ) + \Lambda_\e \rho_\e \We^* =
\n \\
& = \left\{ \e^{-1} c\Delta_y \mathcal{V}_1 + \e^{0} \left(
c\Delta_y \mathcal{V}_2(y)+\frac{\partial c}{\partial
        x_k}\left\{ \Delta_y\mathcal{P}_k(y) + 2 \frac{\p \mathcal{V}_1}{ \p
y_k}\right\} \right) + \right.
\n \\
& + \left.\left. \e^1 \left( 2 \frac{\p^2 v_2}{\p x_j \p y_j} +
\Delta_x v_1  + \lm_0 v_1 \right) + \e^2 (\Delta_x v_2 + \lm_1 v_1
+\lm_0 v_2) + \e^3 \lm_1 v_2 \right\}\right|_{y=\frac{x}{\e}}.
\label{coef estimate 100}
\end{align}
}
Applying next ${\Ae}$ to \eqref{w'} in $\Ome_0$ we
obtain
{\small
\begin{align}
\di( & a_\e \nabla \We^* ) + \Lambda_\e \rho_\e \We^* =
\n \\
 & =
 \left\{   \e^{0} \left(
 c\left[(\Delta_y+\lm_0) \mathcal{W}_1+ \lm_1 \phi\right]+\frac{\partial c}{\partial
        x_k}[(\Delta_y+\lm_0) \mathcal{Z}^{(k)}_1 + 2 \frac{\p \phi}{\p
        y_k}]
          \right) + \right.
\n \\
& + \e^1 \left( (\Delta_y+\lm_0) w_2 + 2 \frac{\p^2 w_1}{\p x_j \p
y_j} + \Delta_x w_0  + \lm_1 w_1 \right) +
\n \\
& + \left.\left. \e^2 \left( 2 \frac{\p^2 w_2}{\p x_j \p y_j} +
\Delta_x w_1 + \lm_1 w_2 \right) + \e^3 \Delta_x w_2
\right\}\right|_{y=\frac{x}{\e}} \label{coef estimate 200}
\end{align}
}
Evaluating the jumps of conormal derivatives on $\Gamma^\e$,
we obtain
{\small
\begin{align}
\left. a_\e \frac{\p \We^*}{\p n} \right|_0 & - \left. a_\e \frac{\p
\We^*}{\p n} \right|_1 =
\e^0 c\left( \frac{\p \phi}{\p \ny} - \frac{\p \mathcal{V}_1}{\p
\ny} \right)
\Big|_{\substack{x\in \tGe
\\
y \in \tG \hspace{1.5mm}}} +
\n
\\
&
+\e^1 \left( c\left\{\frac{\p \mathcal{W}_1}{\p \ny}  - \frac{\p
\mathcal{V}_2}{\p \ny}\right\} +\frac{\partial c}{\partial
        x_k}\left\{\frac{\p
\mathcal{Z}_1^{(k)}}{\p \ny} + n_k\phi - \frac{\p \mathcal{P}_k}{\p
\ny} - n_k\mathcal{V}_1\right\} \right)
\Big|_{\substack{x\in \tGe \\
y \in \tG \hspace{1.5mm}}}
+
\n \\
& + \e^2 \left( \frac{\p w_2}{\p \ny} + \frac{\p w_1}{\p \nx} -
\frac{\p v_2}{\p \nx} \right)
\Big|_{\substack{x\in \tGe \\
y \in \tG \hspace{1.5mm}}} + \e^3 \frac{\p w_2}{\p \nx}
\Big|_{\substack{x\in \tGe \\
y \in \tG \hspace{1.5mm}}}.\label{coef estimate 300}
\end{align}
}
On the other hand function $\We^*$ is required to be continuous, i.e
we have
\be
    \mathcal{W}_1 = \mathcal{V}_1
    \quad   \textrm{on}\quad  \Gamma,
    \label{cont1}
\ee
      \be
      \mathcal{Z}_1^{(k)} = 0
      \quad   \textrm{on}\quad  \Gamma,
      \label{cont2}
      \ee
      \be
      \mathcal{W}_2 = \mathcal{V}_2
      \quad   \textrm{on}\quad  \Gamma,
      \label{cont3}
      \ee
      \be
      \mathcal{Z}_2^{(k)}=\mathcal{P}_k
      \quad   \textrm{on}\quad  \Gamma.
      \label{cont4}
      \ee
Equating to zero the term of order $\e^{-1}$ in \eqref{coef estimate 100}
and the term of order $\e^0$ in \eqref{coef estimate 300},
we obtain problem for $\mathcal{V}_1$:
    \be   \Delta_y \mathcal{V}_1=0\quad   \textrm{in}\quad
    Q_1,\qquad  \frac{\p \mathcal{V}_1}{\p
\ny}=\frac{\p \phi}{\p \ny}\quad  \textrm{on}\quad  \Gamma.  \label{pr1}
    \ee
A solution to this problem exists since $\langle\phi\rangle=0$ and we can present
it as:
    \be
        \mathcal{V}_1=\widetilde{\mathcal{V}}_1 + \widetilde{A},
    \ee
where $<\widetilde{\mathcal{V}}_1>=0$ and $\widetilde{A}$ is a constant which
will be determined later.

 Equating to zero the term of order $\e^0$ in \eqref{coef estimate
 200},
and using \eqref{cont1}, \eqref{cont2} we obtain problems for $
\mathcal{Z}_1^{(k)}$
    \be
    (\Delta_y+\lm_0) \mathcal{Z}_1^{(k)}=- 2 \frac{\p
    \phi}{\p y_k} \quad
    \textrm{in}\quad Q_0,\qquad \mathcal{Z}_1^{(k)}=0\quad  \textrm{on}\quad  \Gamma,
    \label{pr4}
    \ee
which admits an explicit solution
    \be
    \mathcal{Z}^{(k)}_1(y)=-y_k
    \phi(y), \label{zk}
    \ee
and  for $\mathcal{W}_1$
    \be   (\Delta_y+\lm_0) \mathcal{W}_1=- \lm_1 \phi\quad   \textrm{in}\quad
    Q_0,\qquad \mathcal{W}_1=\mathcal{V}_1\quad  \textrm{on}\quad  \Gamma. \label{pr2}
    \ee
A solution to the latter  exists if and only if
    \be \lm_1 = \intl_{\Gamma} \cV_1 \frac{\p \phi}{\p n_y }\,
    dy = -\int_{Q_1}|\nabla \mathcal{V}_1|^2 dy
    =
    -\int_{Q_1}|\nabla \widetilde{\mathcal{V}}_1|^2 dy,
\label{lm1p}
    \ee
 and we can present it in the following way:
    \be
        \mathcal{W}_1 = \widetilde{\mathcal{W}}_1
        + \widetilde{A} \eta,
    \ee
where $\widetilde{\mathcal{W}}_1$ solves problem \eqref{pr2}
with $\mathcal{V}_1$ replaced by $\widetilde{\mathcal{V}}_1$
(a solution exists for the same reason), and $\eta$ solves
\eqref{probleta} (a solution exists since $\langle\phi\rangle=0$).
Notice that
$$
\lm_0 \langle\eta\rangle = -\int_{\Gamma} \frac{\p \eta }{\p
\ny}dy\neq 0,
$$
otherwise $\lambda_0$ would be a solution of
\eqref{equation for lm0 2} which contradicts to the assumptions of this
section.

Equating to zero the term of order $\e^0$ in \eqref{coef estimate 100}
and the term of order $\e^1$ in \eqref{coef estimate 300},
we obtain problems for $\mathcal{V}_2$ and $\mathcal{P}_k$.
For $\mathcal{V}_2$  we have:
    \be   \Delta_y \mathcal{V}_2=0\quad   \textrm{in}\quad
    Q_1,\qquad \frac{\p \mathcal{V}_2}{\p
    \ny}=\frac{\p \mathcal{W}_1 }{\p \ny}\quad  \textrm{on}\quad  \Gamma.
    \label{pr3}
    \ee
A solution to this problem exists if and only if
    \be
    0 = \intl_{\Gamma} \frac{\p \mathcal{W}_1 }{\p \ny}\,
    dy = \intl_{\Gamma} \frac{\p \widetilde{\mathcal{W}}_1 }{\p \ny}\,
    dy + \widetilde{A} \intl_{\Gamma} \frac{\p \eta }{\p \ny}\,
    dy,
    \ee
and consequently
    \be
    \widetilde{A} =
    (\lm_0 \langle \eta \rangle)^{-1}
    \intl_{\Gamma} \frac{\p \widetilde{\mathcal{W}}_1 }{\p \ny}\,
    dy.
    \ee
The problem for $\mathcal{P}_k$ has the form:
    \be
    \Delta_y\mathcal{P}_k(y) =- 2 \frac{\p \mathcal{V}_1}{ \p
    y_k}\quad   \textrm{in}\quad
    Q_1,\qquad  \frac{\p \mathcal{P}_k}{\p \ny}=\frac{\p
    \mathcal{Z}_1^{(k)}}{\p \ny}    - n_k\mathcal{V}_1\quad  \textrm{on}\quad \Gamma.
    \label{pr5}
    \ee
Solvability condition for this problem has the form
    \be
    2\intl_{ Q_1}   \frac{\p \mathcal{V}_1}{ \p
    y_k}\,
    dy=\intl_{\Gamma}\left( \frac{\p
    \mathcal{Z}_1^{(k)}}{\p \ny}   - n_k\mathcal{V}_1\right)\,
    dy.
    \label{conpr5}
    \ee
The left hand side of \eqref{conpr5} can be transformed as follows,
    \be
    \int_{ Q_1}  2 \frac{\p \mathcal{V}_1}{ \p
    y_k}\,
    dy=-2\intl_{\Gamma}n_k\mathcal{V}_1\,
    dy.
    \label{conpr51}
    \ee
On the other hand, for the right hand side of \eqref{conpr5},
    $$
    \int_{\Gamma}\left( \frac{\p
    \mathcal{Z}_1^{(k)}}{\p \ny}   - n_k\mathcal{V}_1\right)\,
    dy=\intl_{\Gamma}\left( -y_k\frac{\p
    \phi}{\p \ny}   - n_k\mathcal{V}_1\right)\,
    dy=
    $$
    \be
    =
    \intl_{\Gamma}\left( -y_k\frac{\p
    \mathcal{V}_1}{\p \ny}   - n_k\mathcal{V}_1\right)\,
    dy=-2
    \intl_{\Gamma} n_k\mathcal{V}_1\,
    dy. \label{conpr52}
    \ee
 Here we used \eqref{zk}, \eqref{pr5} and the integration by parts.
 Comparing \eqref{conpr51} and \eqref{conpr52} we see that solvability
 condition \eqref{conpr5} is satisfied. Finally $\mathcal{W}_2$ and
 $\mathcal{Z}_2^{(k)}$ are arbitrary smooth functions satisfying
 \eqref{cont3} and \eqref{cont4}.  $\,\,\,\,\,\,\,\Box$

\section*{Acknowledgments}
The work was supported by Bath Institute for Complex Systems
(EPSRC grant GR/S86525/01),
 by Nuffield grant NAL/32758, EPSRC grant EP/E037607/1
 and RFBR grant 07-01-00548.
The authors acknowledge partial support of Isaac Newton Institute for
Mathematical Sciences under the programme ``Highly Oscillatory Problems:
 Computation, Theory and Application'' February-July 2007.

\bibliographystyle{plain}


\vskip1cm

{\scriptsize

N.Babych@bath.ac.uk

I.V.Kamotski@bath.ac.uk

V.P.Smyshlyaev@maths.bath.ac.uk

}
\medskip

Department of Mathematical Sciences,
 University of Bath, Bath BA2 7AY, UK


\end{document}